\newcommand{\papertitle}{{Relating $p$-adic eigenvalues and the local
  Smith normal form}}
\newcommand{\paperauthors}{{Mustafa Elsheikh, Mark Giesbrecht}}
\definecolor{darkgreen}{rgb}{0,.4,0}
\definecolor{darkblue}{rgb}{0,0,.5}
\definecolor{darkred}{rgb}{.6,0,0}
\numberwithin{equation}{section}
\newtheorem{theorem}{Theorem}
\newtheorem*{theorem*}{Theorem}
\newtheorem{lemma}{Lemma}
\newtheorem{definition}{Definition}
\newtheorem{fact}{Fact}
\newtheorem{example}{Example}
\renewcommand{\det}{\operatorname{det}}
\DeclareMathOperator{\sgn}{sgn}
\DeclareMathOperator{\rank}{rank}
\DeclareMathOperator{\diag}{diag}
\DeclareMathOperator{\orbit}{orbit}
\DeclareMathOperator{\stab}{stab}
\DeclareMathOperator{\rem}{rem}
\DeclareMathOperator{\nequiv}{\mskip4mu\not\equiv\mskip4mu}
\DeclareMathOperator{\ord}{{\mathit{v}}}
\DeclareMathOperator{\cont}{cont}
\newcommand{\K}{\mathsf{K}}
\newcommand{\Abar}{\overline{A}}
\newcommand{\kbar}{\overline{k}}
\newcommand{\Ahat}{\widehat{A}}
\newcommand{\rhat}{\hat{r}}
\newcommand{\Jhat}{\widehat{J}}
\newcommand{\Atil}{\widetilde{A}}
\newcommand{\NP}{{\mathsf{NP}}}
\newcommand{\GL}{{\mathrm{GL}}}
\newcommand{\M}{{\mathrm{M}}}
\newcommand{\ZZ}{{\mathbb{Z}}}
\newcommand{\QQ}{{\mathbb{Q}}}
\newcommand{\RR}{{\mathbb{R}}}
\newcommand{\NN}{{\mathbb{N}}}
\newcommand{\R}{{\mathsf{R}}}
\newcommand{\nxn}{{n\times n}}
\newcommand{\calC}{{\mathcal{C}}}
\renewcommand{\SS}{{\mathfrak{S}}}
\newcommand{\MnZ}{{\ZZ^\nxn}}
\newcommand{\sbar}{{\overline{s}}}
\newcommand{\gbar}{{\overline{g}}}
\newcommand{\Deltabar}{{\overline{\Delta}}}
\newcommand{\y}{y}
\newcommand{\U}{\mathcal{U}}
\newcommand{\V}{\mathcal{V}}
\begin{document}


\title{\papertitle}
\author{\paperauthors \\
  {melsheik@uwaterloo.ca, mwg@uwaterloo.ca}\\
  {Cheriton School of Computer Science,}\\
  {University of Waterloo, Canada}}


\maketitle

\begin{abstract}
  Conditions are established under which the $p$-adic valuations of
  the invariant factors (diagonal entries of the Smith form) of an
  integer matrix are equal to the $p$-adic valuations of the
  eigenvalues.  It is then shown that this correspondence is the
  typical case for ``most'' matrices; density counts are given
  for when this property holds, as well as easy transformations to this
  typical case.
\end{abstract}


\subsection*{Keywords}
Integer matrices, $p$-adic numbers, eigenvalues, Smith normal form

\subsection*{AMS Subject Classification}
15A36, 15A18, 15A21

\section{Introduction}
\label{sec:intro}

Recall that any matrix $A\in\MnZ$ of rank $r$ can be written as $A=PSQ$ where
$P,Q\in\MnZ$ are unimodular matrices (i.e., whose inverses are also in $\MnZ$)
and $S=\diag(s_1,\ldots,s_r,0,\ldots,0)$ is the \emph{Smith normal form} (SNF)
of $A$, where $s_1,\ldots,s_r\in\ZZ$ are $A$'s \emph{invariant factors},
and $ s_1 \mid s_2
  \mid \cdots \mid s_r $.  Alternatively, if we define the $i$th
\emph{determinantal divisor} $\Delta_i$ of $A$ as the GCD of all $i\times i$
minors of $A$, then $\Delta_{i-1}$ divides $\Delta_i$ and $s_1=\Delta_1$ and
$s_i=\Delta_i/\Delta_{i-1}$ for $2\leq i\leq r$.
See \citep{Newman:1972} for a full treatment of this theory.

\emph{A priori} the invariant factors of a matrix and the
\emph{eigenvalues} of a matrix would seem to be rather different
invariants, the former related to the $\ZZ$-lattice structure of $A$
and the latter to the geometry of the linear map. We show that, in
fact, they are ``usually'' in one to one correspondence with respect
to their $p$-adic valuations at a prime $p$.  We demonstrate a simple
sufficient condition under which this holds for any integer matrix,
and provide bounds on the density of matrices for which it holds.  The
list of powers of $p$ in the invariant factors are often referred to as
the \emph{local Smith form at $p$} by some authors \citep{Gerstein:1977,
Dumas:2001, Wilkening:2011, Elsheikh:2012}.

Throughout we will work in the ring of $p$-adic integers
$\ZZ_p\supseteq\ZZ$, the $p$-adic completion of $\ZZ$, and its
quotient field $\QQ_p\supseteq\QQ$, the $p$-adic numbers.  See
\citep{Koblitz:1984} or \citep{Gouvea:1997} for an introduction.  Let
$\ord_p(a) \in \NN \cup \{ \infty \}$ be the \emph{$p$-adic order} or \emph{$p$-adic
  valuation} of any $a \in \ZZ_p$, the number of times $p$ divides $a$
exactly, where $v_p(0) = \infty$. The valuation can be extended to $\QQ_p$ by
letting $v_p(a/b)=v_p(a)-v_p(b)$ for
$a, b\in\ZZ_p$.

\begin{example}\label{ex:good-matrix-intro}
  Consider the matrix
  \[ 
  A =
  \begin{pmatrix} 3 & -1 & 3 \\
    9 & -10 & 0 \\ 3 & 0 & 3
  \end{pmatrix} = \overbrace{\begin{pmatrix} 1&-1&0\\ 1&0&1\\ 0&-1&0
    \end{pmatrix}}^U \overbrace{\begin{pmatrix} 1& &\\ &3&\\ &&9
    \end{pmatrix}}^S \overbrace{\begin{pmatrix} 0&-1&0\\ -1&0&-1\\ 1&-1&0
    \end{pmatrix}}^V,
  \] 
  for unimodular $U,V$ and Smith form $S$ of $A$.  Now consider the
  eigenvalues of $A$, which are roots of the characteristic polynomial
  \[ 
  f = \det(xI - A) = x^3+4x^2-51x-27\in\ZZ[x]. 
  \] 
  We find it has three distinct roots in $\ZZ_3$:
  \begin{align*}
    \lambda_1 & = -1 -3^3-3^4-3^5-3^6-3^8+3^9+O(3^{10}),\\
    \lambda_2 & = -3 -3^2 -3^3-3^4+3^6-3^8-3^9 + O(3^{10}),\\
    \lambda_3 & = \mskip14mu 3^2 -3^3-3^5+3^6-3^8+3^9 + O(3^{10}).
  \end{align*}
  In this example we see that $\ord_3(\lambda_1) = 0$,
  $\ord_3(\lambda_2) = 1$ and $\ord_3(\lambda_3) = 2$.  Recalling that the Smith
  form of $A$ above is $S = \diag(1, 3, 3^2)$, we see that the diagonal entries
  of the Smith form have precisely the same $p$-adic valuations as the
  eigenvalues of $A$. \qed
\end{example}

The eigenvalues of $A$ are roots of the characteristic polynomial,
which has a natural image in $\ZZ_p[x]$ since $\ZZ_p$ contains $\ZZ$.
Thus, the eigenvalues of $A$ can naturally be viewed as
$p$-adic algebraic integers in a finite-degree algebraic
extension field $\K_p$
over $\QQ_p$ \cite[Proposition 5.4.5 (v)]{Gouvea:1997}.

In order to show the correspondence between the eigenvalues and
the invariant factors,
we need to extend the definition of the valuation $v_p$ to the eigenvalues
(more generally, to the elements of $\K_p$).
If an element $a \in \K_p$ has a minimal polynomial
$x^{d_a} + a_{d_a-1} x^{d_a-1} + \ldots + a_0 \in \QQ_p[x]$,
then the valuation is uniquely given by
$v_p(a) = (1/d_a) v_p(a_0)$.
See \cite[\S 3, pp. 66]{Koblitz:1984}.
The image of the extended $v_p$ is $\QQ$, and its restriction to $\QQ_p$
agrees with the earlier definition of $v_p$ on $\QQ_p$.
The valuation of a non-zero eigenvalue $v_p(\lambda_i)$
is independent of the choice of $\K_p$,
since it only depends on the minimal polynomial of $\lambda_i$ over $\QQ_p$.
In particular, the set of minimal polynomials of the non-zero eigenvalues
is precisely the set of irreducible factors of the
characteristic polynomial of the matrix over $\QQ_p$ regardless of the
field extension. Therefore, $v_p(\lambda_1), \ldots, v_p(\lambda_n)$
are invariants of the matrix over $\QQ_p$, and independent of the $p$-adic
extension chosen to contain the eigenvalues.



  In light of the above, we will treat integer
matrices and their eigenvalues as being naturally embedded in $\ZZ_p$,
$\QQ_p$ or $\K_p$ as appropriate, under the $p$-adic valuation $v_p$.

It should be noted that the correspondence between the valuations of the
eigenvalues and the invariant factors does not hold for all matrices. 

\begin{example}\label{ex:bad-matrix-intro}
Let
  \[ 
  A = \begin{pmatrix}
    37 & 192 & 180 & 369 \\
    55 & 268 & 198 & 531 \\
    163 & 758 & 442 & 1539 \\
    198 & 908 & 486 & 1858
  \end{pmatrix}, 
  \] 
  which has the Smith form decomposition:
  \[ 
  A = \begin{pmatrix}
    1 & 1 & 1 & 1 \\
    1 & 0 & 1 & 0 \\
    1 & 0 & 0 & 0 \\
    0 & 1 & 0 & 0
  \end{pmatrix}
  \begin{pmatrix}
    1 &  &  &  \\
     & 2 &  &  \\
     &  & 2 &  \\
     &  &  & 4
  \end{pmatrix}
  \begin{pmatrix}
    163 & 758 & 442 & 1539 \\
    99 & 454 & 243 & 929 \\
    -54 & -245 & -122 & -504 \\
    -54 & -246 & -126 & -505
  \end{pmatrix}.
  \]
  The characteristic polynomial of $A$ is 
  \[ 
  f = x^{4} - 2605x^{3} + 39504x^{2}
  + 40952x + 16 \in \ZZ[x] 
  \] 
  which factors over $\QQ_2$ into 
  \[ 
  x + \left(1 + 2^2 + 2^3 + O(2^5)\right)\in\ZZ_2[x]
  \] 
  and the irreducible factor 
  \[ 
  x^3 + \left(2^3 +  O(2^5)\right) x^2 +\left (2^3 + 2^4 +
    O(2^5)\right) x + \left(2^4 + O(2^5)\right) \in\ZZ_2[x]. 
  \] 
  Using Newton slopes (see Fact~\ref{fact:newtonp} below) we find that the
  $2$-adic valuations of the roots of the second factor are $[4/3,
  4/3, 4/3]$.  Thus, the $2$-adic valuations of the invariant
  factors of $A$ are $[0, 1, 1, 2]$, while the $2$-adic valuations of the
  eigenvalues of $A$ are $[0, 4/3, 4/3, 4/3]$. The
  $p$-adic valuations of the
  eigenvalues and
  the invariant factors are therefore \emph{not} in $1$-$1$ correspondence. \qed
\end{example}

In the remainder of this paper we explore the conditions under which this
correspondence between the $p$-adic valuation of invariant factors and
eigenvalues occurs, and show that it is, in fact, 
the  ``typical'' case, i.e.,  it
holds for ``most'' matrices.

\subsection{Main Results}
\label{sec:main-result}

We first define two important matrix properties for our purposes.  

\begin{definition}
  Let $A\in \MnZ$ be of rank $r$ and $p$ be any prime.  Assume
  \begin{itemize} 
  \item[(i)] $A$ has Smith form $S=\diag(s_1,\ldots,s_r,0,\ldots,0)$
    over $\ZZ$,
    so that $\Delta_i=s_1\cdots s_i$ is the $i$th determinantal
    divisor of $A$, for $1\leq i\leq r$;
  \item[(ii)] $A$ has non-zero eigenvalues (with multiplicity)
    $\lambda_1,\ldots,\lambda_r$ in a finite-degree extension $\K_p$ over
    $\QQ_p$, and assume that
    $\ord_p(\lambda_1)\leq \cdots \leq\ord_p(\lambda_r)$;
  \item[(iii)] $A$ has characteristic polynomial $f = x^n + f_{1} x^{n-1} +
    \ldots + f_{r} x^{n - r} \in \ZZ[x]$ (note the reversed indexing).
\end{itemize}
\smallskip
\noindent
We say $A$ is \emph{$p$-characterized} if and only if
$\ord_p(f_i) = \ord_p(\Delta_i)$ for all $i \in [1, r]$.
\smallskip
\noindent
We say $A$ is \emph{$p$-correspondent} if and only if
$\ord_p(s_i)=\ord_p(\lambda_i)$ for all $i \in [1, r]$.
\end{definition}

Note that if $A$ is $p$-correspondent, then the valuations of the eigenvalues
are non-negative integers (since $v_p(s_i) \ge 0$).
Our main goal is to study the notion of $p$-correspondence; that is the
relationship between the spectrum and the invariant factors.  The notion of
$p$-characterization is an auxiliary definition used throughout our
proofs.  The following theorem gives the relationship between a
matrix being $p$-correspondent and being $p$-characterized; the proof is in
Section~\ref{sec:proof}.

\begin{theorem}
  \label{thm:pdet->pcosp}
  Let $A \in \MnZ$ and $p$ be a prime.  If $A$ is $p$-characterized then $A$
  is $p$-correspondent.
\end{theorem}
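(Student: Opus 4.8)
The plan is to read both sides of the desired identity off a single convex polygonal path. Put $f_0 = 1$ and $\Delta_0 = 1$. By assumption~(iii), $f = x^{n-r} g(x)$ where $g = x^r + f_1 x^{r-1} + \cdots + f_r \in \ZZ[x]$; and if $A$ is $p$-characterized then $\ord_p(f_r) = \ord_p(\Delta_r) < \infty$, so $f_r \ne 0$, hence $g(0) \ne 0$ and the nonzero eigenvalues $\lambda_1, \ldots, \lambda_r$ are precisely the roots of $g$ (with multiplicity) in $\K_p$.

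Next I would use the Newton polygon to describe the eigenvalue valuations. Passing to the reciprocal polynomial $g^*(x) = x^r g(1/x) = \sum_{i=0}^r f_i x^i$, whose roots are the $1/\lambda_i$, Fact~\ref{fact:newtonp} says that the Newton polygon of $g^*$ --- the lower convex hull $\nu$ of the points $P_i = (i, \ord_p(f_i))$ for $0 \le i \le r$ --- has a segment of slope $\mu$ and horizontal length $\ell$ exactly when $\ell$ of the $1/\lambda_i$ have valuation $-\mu$, i.e.\ when $\ell$ of the eigenvalues have valuation $\mu$. Reading the slopes of $\nu$ from left to right thus recovers $\ord_p(\lambda_1) \le \cdots \le \ord_p(\lambda_r)$, and since $\nu(0) = \ord_p(f_0) = 0$ we get $\nu(i) = \ord_p(\lambda_1) + \cdots + \ord_p(\lambda_i)$ for $0 \le i \le r$. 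Being a lower convex hull, $\nu$ also satisfies $\nu(i) \le \ord_p(f_i)$ for every $i$, with equality precisely at the vertices of $\nu$; note in passing that $\ord_p(f_i) \ge \ord_p(\Delta_i)$ holds for any integer matrix, since every $i \times i$ principal minor of $A$ --- and hence $f_i$, which up to sign is the sum of these minors --- is divisible by $\Delta_i$.

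Now I would bring in the Smith form. The divisibility chain $s_1 \mid \cdots \mid s_r$ makes the sequence $0 \le \ord_p(s_1) \le \cdots \le \ord_p(s_r)$ non-decreasing, so the partial sums $\ord_p(\Delta_i) = \ord_p(s_1) + \cdots + \ord_p(s_i)$, together with $\ord_p(\Delta_0) = 0$, form a convex sequence: the points $(i, \ord_p(\Delta_i))$ lie on the graph of a convex piecewise-linear function. If $A$ is $p$-characterized then $\ord_p(f_i) = \ord_p(\Delta_i)$ for all $i \in [0, r]$, so the points $P_i$ are themselves already in convex position; consequently their lower convex hull $\nu$ must pass through every one of them, giving $\nu(i) = \ord_p(\Delta_i)$. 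Comparing with the previous paragraph, $\ord_p(\lambda_1) + \cdots + \ord_p(\lambda_i) = \ord_p(\Delta_i) = \ord_p(s_1) + \cdots + \ord_p(s_i)$ for every $i \in [0,r]$, and subtracting consecutive identities yields $\ord_p(\lambda_i) = \ord_p(s_i)$ for all $i \in [1, r]$, which is exactly $p$-correspondence.

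I do not anticipate a genuine obstacle here: the one real idea is that the determinantal-divisor valuations are automatically a convex sequence, which is precisely what forces the Newton polygon of $g$ through all of the coefficient points once $p$-characterization is assumed. The only points requiring care are bookkeeping --- verifying $f_r \ne 0$ so that the eigenvalues really are the roots of $g$, handling the possibly non-integral (merely $\QQ$-valued) valuations coming from ramified extensions, which Fact~\ref{fact:newtonp} absorbs uniformly, and fixing the orientation of the Newton polygon, for which the reciprocal polynomial $g^*$ is the most convenient device.
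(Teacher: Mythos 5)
Your argument is correct and is essentially the paper's proof: both read the eigenvalue valuations off the Newton polygon of (the nonzero part of) the characteristic polynomial, with the key observation being that under $p$-characterization the points $(i,\ord_p(f_i)) = (i,\ord_p(\Delta_i))$ already form a convex sequence --- since $s_1\mid\cdots\mid s_r$ --- so the lower hull passes through every point and its slopes match both $\ord_p(s_i)$ and $\ord_p(\lambda_i)$. The paper verifies this by explicitly writing out the sequence $\ord_p(f_1),\ldots,\ord_p(f_r)$ grouped by multiplicity rather than invoking convexity abstractly, and note that your detour through the reciprocal polynomial $g^*$ is unnecessary --- the paper's Fact~\ref{fact:newtonp} is already stated for the monic polynomial $g$ directly, with its Newton polygon being the hull of the same points $(i,\ord_p(f_i))$ and its slopes giving $\ord_p(\lambda)$ without a sign flip.
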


Of course, not all matrices are $p$-correspondent at any particular
prime $p$, but it is generally possible to transform a matrix to a
$p$-correspondent one.  We offer the following two simple lemmas in this
regard, the proofs of which are in Section \ref{sec:proof}.

\begin{lemma}
  \label{lem:exists-equiv}
  Let $A\in\MnZ$ and $p$ be any prime. There exists an equivalence transformation
  $P,Q \in \GL_n(\ZZ)$ such that $PAQ$ is $p$-correspondent.
\end{lemma}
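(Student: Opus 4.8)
The plan is to exploit Theorem~\ref{thm:pdet->pcosp}: it suffices to produce unimodular $P,Q$ so that $PAQ$ is $p$-characterized, i.e., so that $\ord_p$ of the $i$th coefficient of the characteristic polynomial equals $\ord_p(\Delta_i)$ for all $i\in[1,r]$. The determinantal divisors $\Delta_i$ are equivalence invariants, so the right-hand sides are fixed; only the characteristic polynomial changes under $B\mapsto PAQ$. The cleanest route is to first reduce $A$ to its Smith form $S=\diag(s_1,\ldots,s_r,0,\ldots,0)$ by unimodular $P_0,Q_0$, and then further multiply by a unimodular matrix chosen to make the eigenvalues visibly match the $s_i$. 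Indeed, for $A=S$ itself, if the $s_i$ happened to be pairwise coprime away from the diagonal structure — more precisely, if $S$ were already triangularizable with the $s_i$ on the diagonal — the coefficients $f_i$ of its characteristic polynomial would be (up to sign) the elementary symmetric functions in $s_1,\dots,s_r$, whose $p$-adic valuation is exactly $\ord_p(s_1\cdots s_i)=\ord_p(\Delta_i)$ since $\ord_p(s_1)\le\cdots\le\ord_p(s_r)$. The subtlety is that the diagonal matrix $S$ has characteristic polynomial $\prod(x-s_i)$ only when the zero block is absent; with rank $r<n$ we get an extra factor $x^{n-r}$, which is fine because the definition of $p$-characterized only constrains $f_1,\dots,f_r$.

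So the concrete construction is: take $P=P_0$, $Q=Q_0$ (the Smith-form transforms), giving $PAQ=S=\diag(s_1,\dots,s_r,0,\dots,0)$. Its characteristic polynomial is $x^{n-r}\prod_{i=1}^r(x-s_i)=x^n - (\textstyle\sum s_i)x^{n-1}+\cdots$, so $f_i=(-1)^i e_i(s_1,\dots,s_r)$ for $1\le i\le r$, where $e_i$ is the $i$th elementary symmetric polynomial. Now $e_i(s_1,\dots,s_r)=\sum_{|I|=i}\prod_{j\in I}s_j$; because the valuations are sorted increasingly, the unique term of minimal valuation is $s_1s_2\cdots s_i$, and all other terms have strictly larger or equal valuation — one must check this is a \emph{strict} minimum or that ties do not cancel, but in fact the term $s_1\cdots s_i$ has valuation $\ord_p(\Delta_i)$ and every other monomial $\prod_{j\in I}s_j$ with $I\ne\{1,\dots,i\}$ has valuation $\ge\ord_p(\Delta_i)$, with the key point that $s_1\cdots s_i \mid \prod_{j\in I}s_j$ in $\ZZ_p$ is false in general but $\ord_p(s_1\cdots s_i)\le \ord_p(\prod_{j\in I}s_j)$ always holds by the sorting. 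Hence $\ord_p(e_i)=\ord_p(s_1\cdots s_i)=\ord_p(\Delta_i)$, so $S$ is $p$-characterized, and by Theorem~\ref{thm:pdet->pcosp} it is $p$-correspondent.

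The one genuine obstacle is the possibility of \emph{cancellation} among the monomials of $e_i$ that share the minimal valuation: if several subsets $I$ give $\ord_p(\prod_{j\in I}s_j)=\ord_p(\Delta_i)$, their leading $p$-adic digits could conceivably sum to something divisible by a higher power of $p$, pushing $\ord_p(e_i)$ above $\ord_p(\Delta_i)$. To handle this cleanly I would avoid the bare Smith form and instead post-multiply by a suitable unimodular matrix realizing a companion-type or upper-triangular form with entries $s_1,\dots,s_r$ on the diagonal but with generic off-diagonal perturbations — or, more simply, replace $S$ by $S\cdot C$ where $C$ is a unimodular matrix making the product similar to $\diag(u_1s_1,\dots,u_rs_r,\ast)$ with the $u_i$ units chosen so no cancellation occurs; since $\ord_p$ is unchanged by unit multiples, this preserves $\ord_p(\Delta_i)$ while killing the coincidental cancellations. (Alternatively, one observes that even with cancellation $\ord_p(e_i)\ge\ord_p(\Delta_i)$ always, and a dimension/genericity argument over the residue field shows a unimodular twist attaining equality exists.) Once equality $\ord_p(f_i)=\ord_p(\Delta_i)$ is secured for all $i\le r$, $PAQ$ is $p$-characterized, Theorem~\ref{thm:pdet->pcosp} gives $p$-correspondence, and the proof is complete.
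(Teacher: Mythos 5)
Your plan is a substantial over-complication of what is, in the paper, a one-line proof, and in the course of over-complicating it you run into a genuine obstruction that your proposed patches do not resolve. The paper simply takes $P,Q$ so that $PAQ=S=\diag(s_1,\ldots,s_r,0,\ldots,0)$ is the Smith form, and observes that the eigenvalues of the diagonal matrix $S$ are literally its diagonal entries, which are literally its invariant factors. Since $s_1\mid\cdots\mid s_r$ forces $\ord_p(s_1)\le\cdots\le\ord_p(s_r)$, this already gives $\ord_p(\lambda_i)=\ord_p(s_i)$ for all $i\in[1,r]$, i.e.\ $p$-correspondence, directly from the definition. There is no need to pass through $p$-characterization and Theorem~\ref{thm:pdet->pcosp} at all.

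Your route of first establishing $p$-characterization of $S$ is not just an unnecessary detour — it is actually \emph{false}. Take $S=\diag(1,2,6)$ and $p=3$: the coefficient $f_1=-(s_1+s_2+s_3)=-9$ has $\ord_3(f_1)=2$, while $\ord_3(\Delta_1)=\ord_3(s_1)=0$, so $S$ is not $3$-characterized even though it is trivially $3$-correspondent (valuations $0,0,1$ on both sides). The ``cancellation'' worry you raise is the real phenomenon here, not a corner case. Your proposed repairs do not work as stated: over $\ZZ$ the only units are $\pm1$, so a ``unimodular twist'' $\diag(u_1s_1,\ldots,u_rs_r)$ only toggles signs, and it is not clear (and not shown) that one can simultaneously avoid cancellation in every $e_i$ by sign choices alone; the ``genericity over the residue field'' suggestion is not an argument. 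The short fix is to abandon $p$-characterization entirely and check $p$-correspondence of $S$ directly, as the paper does.
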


\begin{lemma}
  \label{lem:exists-sim}
  Let $A \in \MnZ$ be non-singular, $p$ be any prime.  There exists a similarity
  transformation $U$ with entries in an extension $\K_p$ over $\QQ_p$ such that
  $U^{-1}AU$ is $p$-correspondent.
\end{lemma}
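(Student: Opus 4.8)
The plan is to conjugate $A$ into Jordan form over a suitable finite extension of $\QQ_p$ and then rescale each Jordan block, so that the conjugate factors as a unimodular matrix times a diagonal matrix carrying the eigenvalues; the invariant factors of such a product can be read off directly from the diagonal. (When $A$ is diagonalizable over $\QQ_p$ the conjugate is already $\diag(\lambda_1,\dots,\lambda_n)$ and there is nothing to do; all the work is in handling nontrivial Jordan blocks.)

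First I would fix a finite-degree extension $\K_p$ of $\QQ_p$ over which the characteristic polynomial $f$ of $A$ splits into linear factors; such a $\K_p$ exists since each root of $f$ generates a finite extension and there are finitely many roots. Let $\O\subseteq\K_p$ be its valuation ring, a discrete valuation ring and hence a PID, and let $v_p$ be the extension of the valuation to $\K_p$; the Smith form of a matrix over $\O$ is taken in the usual PID sense, which extends the integer notion since $\O\supseteq\ZZ$ and $v_p$ restricts correctly, and $p$-correspondence is understood accordingly. Since $A\in\MnZ$, its eigenvalues $\lambda_1,\dots,\lambda_n$ are algebraic integers, hence lie in $\O$, and since $A$ is nonsingular they are all nonzero, so $r=n$. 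Over $\K_p$ there is $U_1\in\GL_n(\K_p)$ with $U_1^{-1}AU_1=\bigoplus_j J_{k_j}(\mu_j)$ a Jordan matrix whose diagonal entries $\mu_j$ (each repeated $k_j$ times, with $\sum_j k_j=n$) are the eigenvalues of $A$ with multiplicity, all nonzero.

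Next I would rescale each block. Conjugating $J_{k_j}(\mu_j)$ by $D_j=\diag(1,\mu_j,\mu_j^2,\dots,\mu_j^{k_j-1})$ fixes the diagonal and multiplies the superdiagonal $1$'s by $\mu_j$, producing $\mu_j I_{k_j}+\mu_j N_{k_j}=(I_{k_j}+N_{k_j})(\mu_j I_{k_j})$, where $N_{k_j}$ is the nilpotent Jordan block. Setting $U=U_1\bigl(\bigoplus_j D_j\bigr)$ gives
\[
B \;:=\; U^{-1}AU \;=\; \Bigl(\bigoplus_j (I_{k_j}+N_{k_j})\Bigr)\cdot\Bigl(\bigoplus_j \mu_j I_{k_j}\Bigr) \;=\; LM,
\]
where $L=\bigoplus_j(I_{k_j}+N_{k_j})$ is unipotent upper triangular, hence unimodular over $\O$, and $M$ is the diagonal matrix whose diagonal entries are the eigenvalues of $A$ counted with multiplicity. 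Since $L$ is unimodular, the $i$th determinantal divisor of $B$ equals that of $M$, and for the diagonal matrix $M$ one computes $v_p(\Delta_i(B))=v_p(\Delta_i(M))=\sum_{t=1}^{i}v_p(\lambda_t)$ once the eigenvalues are sorted so that $v_p(\lambda_1)\le\cdots\le v_p(\lambda_n)$. Hence $v_p(s_1)=v_p(\Delta_1)=v_p(\lambda_1)$ and $v_p(s_i)=v_p(\Delta_i)-v_p(\Delta_{i-1})=v_p(\lambda_i)$ for $i\ge 2$, i.e. $B$ is $p$-correspondent, as required.

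I expect the only real obstacle to be the non-diagonalizable case. A raw Jordan form need not be $p$-correspondent, since a single block $J_k(\lambda)$ with $v_p(\lambda)>0$ and $k\ge 2$ has a unit off the diagonal and hence $v_p(s_1)=0\neq v_p(\lambda)$; and one cannot simply route through Theorem~\ref{thm:pdet->pcosp}, because $p$-characterization can fail here through cancellation among the elementary symmetric functions of the eigenvalues (for instance $\diag(p,-p)$ is $p$-correspondent but not $p$-characterized). The rescaling by $D_j$ is exactly what removes this difficulty, after which the computation of the invariant factors of $B=LM$ is routine linear algebra over a PID.
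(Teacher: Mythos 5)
Your proof is correct and takes essentially the same approach as the paper: pass to a splitting field, take Jordan form, and rescale each block by $\diag(1,\mu_j,\ldots,\mu_j^{k_j-1})$ so that the block becomes $\mu_j I+\mu_j N$. The paper then reduces each rescaled block to $\diag(\mu_j,\ldots,\mu_j)$ by column operations, whereas you factor it as the unimodular $(I+N)$ times $\mu_j I$ and read off the determinantal divisors; this is the same computation in a slightly different guise, and your explicit use of the valuation ring $\O$ simply makes precise what the paper leaves implicit.
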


In Section \ref{sec:density} we establish that ``most''
matrices are $p$-correspondent.  We will consider the density in each
equivalence class defined by a given Smith form $S$.  The following
definition helps capture this.

\begin{definition}
  \label{def:SS_S^m}
  Fix a prime $p$, positive integers $m,n$, and integers $0 \leq e_1\leq
  e_2\leq \cdots \leq e_n$.  Let
  $S=\diag(p^{e_1},\ldots,p^{e_n})\in\ZZ^\nxn$.  Define $\SS_S^m\subseteq\ZZ^\nxn$ as
  the set of integer matrices with entries from $[0, p^m)$ whose Smith
  form $\diag(s_1, \ldots, s_n)$ satisfies $v_p(s_i) = e_i$ for all $i
  \in [1, n]$. 
\end{definition}

Our main result is then as follows.

\begin{theorem}
  \label{thm:cospdense}
  Let $n$ be a positive integer, $\epsilon > 0$, and $p$ any prime greater than
  $16 (n^2+3n)/ \epsilon$. Fix a set of integers $0 \le e_1 \le e_2 \le
  \cdots \le e_n$ and let $m \ge e_1 + \ldots + e_n + 1$ and $S =
  \diag(p^{e_1}, \ldots, p^{e_n}) \in \ZZ^{\nxn}$.  Then the number of matrices
  in $\SS_S^m$ which are $p$-characterized and hence
  $p$-correspondent is at least $(1 - \epsilon) \cdot
  |\SS_S^m|$.
\end{theorem}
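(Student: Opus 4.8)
The plan is to reduce the theorem to a counting statement about when a matrix fails to be $p$-characterized, and then bound that failure set using the structure of the Smith form together with a Schwartz--Zippel-type estimate over the residue field $\FFp$. By Theorem~\ref{thm:pdet->pcosp} it suffices to bound the number of matrices in $\SS_S^m$ that are \emph{not} $p$-characterized, i.e.\ those $A$ for which $\ord_p(f_i) \ne \ord_p(\Delta_i)$ for some $i \in [1,r]$, where $r$ is the rank (here $r=n$ or $r = \max\{i : e_i = 0\}$-type considerations, but since $S$ is fixed with prescribed $e_i$, the relevant $r$ is determined by $S$). First I would fix the block structure: since every $A \in \SS_S^m$ has $\Delta_i = s_1\cdots s_i$ with $v_p(\Delta_i) = e_1 + \cdots + e_i$, the condition $\ord_p(f_i) = \ord_p(\Delta_i)$ is a condition on the $p$-adic valuation of a single coefficient of the characteristic polynomial, namely the $i$th elementary-symmetric-function-like quantity $f_i = (-1)^i \sum (\text{$i\times i$ principal minors})$.

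Second, I would observe that $f_i$ is, up to sign, the sum of all $i \times i$ principal minors of $A$, hence $v_p(f_i) \ge v_p(\Delta_i)$ always holds (every $i\times i$ minor is divisible by $\Delta_i$). So $p$-characterization fails precisely when $v_p(f_i) > v_p(\Delta_i)$ for some $i$, i.e.\ when $f_i / \Delta_i \equiv 0 \pmod p$ even though generically it is a unit. The key structural input is a normalized form: writing $A = PSQ$ over $\ZZ$, or better, reducing modulo $p^{m}$ and using the local Smith form, one can coordinatize $\SS_S^m$ so that the "free" entries of $A$ range over residues and the quantity $f_i/\Delta_i \bmod p$ becomes a fixed polynomial (of degree $\le 2i \le 2n$) in those free entries with coefficients in $\FFp$ that is \emph{not} identically zero. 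Proving non-vanishing of this polynomial — exhibiting at least one matrix in each class $\SS_S^m$ that is $p$-characterized, or directly analyzing the leading behavior — is the main obstacle; I would handle it by an explicit witness construction, e.g.\ perturbing $S$ itself by a generic unimodular-over-$\FFp$ conjugation or by the transformations in Lemma~\ref{lem:exists-equiv}, to show the polynomial $\prod_{i=1}^{r}(f_i/\Delta_i)$ is a nonzero element of the function field.

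Third, with non-vanishing in hand, apply the Schwartz--Zippel lemma: a nonzero polynomial of total degree $D$ over $\FFp$ in $N$ variables has at most $D/p$ fraction of $\FFp^N$ as its zero set. Here the product $\prod_i (f_i/\Delta_i)$ has degree at most $\sum_{i=1}^{r} 2i \le 2\sum_{i=1}^n i = n(n+1) = n^2+n < n^2+3n$, and the number of variables is at most $n^2$. The fraction of $A \in \SS_S^m$ failing is then at most $(n^2+3n)/p < \epsilon/16 < \epsilon$ by the hypothesis $p > 16(n^2+3n)/\epsilon$; the slack factor of $16$ absorbs the bookkeeping cost of the coordinatization (the free entries may only range over a subset of residues, costing a constant factor, and the lift from $\bmod\, p$ to $\bmod\, p^m$ contributes no additional loss since $m \ge e_1 + \cdots + e_n + 1$ guarantees the valuations $v_p(f_i)$ up to $e_1+\cdots+e_n$ are already determined modulo $p^{m}$). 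I expect the delicate points to be (a) verifying that $\SS_S^m$ is nonempty and that its elements are captured by the coordinatization with a controlled multiplicative loss, and (b) the genericity/non-vanishing argument for $\prod_i(f_i/\Delta_i)$, which is where the bulk of the real work lies; the Schwartz--Zippel step and the final arithmetic are routine once those are in place.
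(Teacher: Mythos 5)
Your high-level plan matches the paper's --- parametrize $\SS_S^m$ via $A = (USV)\rem p^m$ with $U,V$ ranging over matrices mod $p^m$, apply a Schwartz--Zippel estimate to $\prod_i (g_i/\Delta_i)$ in the $2n^2$ entries of $U,V$, then push the estimate to a count over $\SS_S^m$ --- but each of the ``delicate points'' you flag is a genuine gap that would break the argument as stated. The non-vanishing of $f_i/\Delta_i \pmod p$ cannot be shown by taking $S$ as the witness: for $S = I_2$ and $p = 2$ one has $f_1 = -2$, so $v_2(f_1) = 1 \ne 0 = v_2(\Delta_1)$ and $S$ is not $2$-characterized (indeed Lemma~\ref{lem:exists-equiv} only yields $p$-\emph{correspondence}, which by Example~\ref{ex:converse} is strictly weaker). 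Appealing instead to ``a generic unimodular-over-$\FFp$ conjugation'' is circular, since the existence of a generic $p$-characterized perturbation is exactly what the non-vanishing is supposed to prove. What the paper actually establishes (Lemma~\ref{lem:content-coefficients}) is that $g_k/\Delta_k$ has \emph{content}~$1$ as a polynomial in the symbolic entries of $\U,\V$, via a distinguished monomial in the Cauchy--Binet expansion; content~$1$ is precisely what guarantees non-vanishing mod every prime at once. Your degree bound $\sum_i 2i = n^2+n$ also omits the requirements $\det U \nequiv 0$ and $\det V \nequiv 0 \pmod p$, which add $2n$ to reach the $n^2+3n$ in the hypothesis.

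The phrase ``controlled multiplicative loss'' also conceals the two hardest technical steps. First, the map $(U,V) \mapsto (USV)\rem p^m$ is many-to-one, and it is not enough that the fibers are ``controlled'': one needs that \emph{every} fiber has the \emph{same} cardinality $|\GL_n(\ZZ/p^m\ZZ)|^2/|\SS_S^m|$, which the paper obtains via the orbit--stabilizer theorem (Lemma~\ref{lem:uniform-sampling}), combined with a separate argument (Lemma~\ref{lem:uniform-sampling-1}) showing that any $L,R$ with $A = (LSR)\rem p^m$ and $A\in\SS_S^m$ must automatically be invertible mod $p$. Second, transferring $p$-characterization of $LSR$ over $\ZZ$ to $p$-characterization of $(LSR)\rem p^m$ is not automatic: element-wise reduction need not commute with minors or GCDs, and the paper devotes Lemmas~\ref{prop:val-phi-rem} through \ref{lem:remp-is-p-char} to showing that the relevant valuations are preserved precisely when $m > v_p(\det A)$, which is where the hypothesis $m \ge e_1+\cdots+e_n+1$ enters. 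Finally, the factor of $16$ is not generic slack but is exactly $|\M_n(\ZZ/p^m\ZZ)|^2/|\GL_n(\ZZ/p^m\ZZ)|^2 < 4^2$ (Lemma~\ref{lem:density-glnpm}), needed to convert the Schwartz--Zippel count over the full cube $[0,p^m)^{2n^2}$ to a count over the invertible pairs.
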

  
\subsection{Previous Work}

\cite{Newman:1991}, Section 8, study the relationship between eigenvalues
and invariant factors of matrices over rings of algebraic
integers. Their results are concerned with products of eigenvalues
rather than individual eigenvalues (or subsets thereof). For
any square matrix over a ring $\R$ of algebraic integers
with invariant factors $s_1, \ldots, s_n$ and
eigenvalues $\lambda_1, \ldots, \lambda_n$ (in some
extension\footnote{
As stated in \cite[Section 2]{Newman:1991} $\bar{\R}$
is taken to be a ring of algebraic integers which contains $\R$ such that every
ideal generated within $\R$
becomes principal within $\bar{\R}$.}
$\bar{\R}$ of $\R$), they prove
(in Theorem 6)
 that for all $k
\in \{1,\ldots, n\}$ and all indexing sets $I \subseteq \{ 1, \ldots,
n \}$, $|I| = k$,
\begin{equation*}
  \label{eq:NT91-thm6} 
  s_1 s_2 \cdots s_k \mid \prod_{i \in I} \lambda_i\,.
\end{equation*}
where divisibility is taken over $\bar{\R}$.

\cite{Rushanan:1995} studied the Smith form and spectrum of
non-singular matrices with integer entries. He
established divisibility relations between the largest invariant
factor $s_n$ and the product of all eigenvalues. 
Recently, the connection between the eigenvalues and Smith form has
also been studied by \cite{Kirkland:2007} for integer matrices with
integer eigenvalues arising from the Laplacian of graphs, and by
\cite{Lorenzini:2008} for Laplacian matrices of rank $n-1$.


\section{Establishing $p$-correspondence}
\label{sec:proof}

We proceed to prove Theorem \ref{thm:pdet->pcosp}, that all
$p$-characterized matrices are $p$-correspondent. 
First recall that the coefficients of the characteristic polynomial
$f = x^n + \sum_{1\leq i\leq n}f_ix^{n-i}\in\ZZ[x]$ of a matrix $A\in\MnZ$ are
related to the minors of $A$.  For $1 \leq i \leq n$, let
$\calC_i^{n}$ denote the set of all $i$-tuples of integers of the form
$t = (t_1, \ldots, t_i)$ where $1 \leq t_1 < \cdots < t_i \leq n$. For
$\sigma, \tau \in \calC_i^{n}$, let $A \binom{\sigma}{\tau}$ denote
the determinant of the $i\times i$ submatrix selected by rows
$\sigma_1, \ldots, \sigma_i$ and columns $\tau_1, \ldots, \tau_i$;
this is the \emph{minor} of $A$ selected by $\sigma$ and $\tau$.
It is well-known and easily derived that, for all $1 \le i \le n$,
\begin{equation} 
  \label{eq:fi-sum-minors} 
  f_i = (-1)^i \sum_{\sigma \in \calC_i^n} A\binom{\sigma}{\sigma}.
\end{equation}
Since $\Delta_i$ divides all $i\times i$ minors, we have
$\Delta_i \mid f_i$, i.e., $\ord_p(f_i) \geq \ord_p(\Delta_i)$.
Moreover, if $A$ has rank $r$ we have $f_{r+1}=f_{r+2}=\cdots=f_n=0$.

We will also require the so-called Newton polygon of the
characteristic polynomial of $A$.
For a polynomial $f = x^n + \sum_{1\leq i\leq n} f_i x^{n-i} \in \ZZ_p[x]$,
the \emph{Newton polygon} of $f$, denoted by
$\NP(f)$, is the lower convex hull of the following points in $\RR^2$:
$\{ (0, 0), (1, \ord_p(f_1)),$ $\ldots$, $(n, \ord_p(f_n)) \}$
(we omit $(i, \ord_p(f_i))$ whenever $f_i = 0$).
This hull is represented by a list of points
$(x_1,y_1),\ldots,(x_{k},y_{k})\in\RR^2$ with
$x_1<x_2<\ldots<x_{k}$.  For each segment of $\NP(f)$ connecting two
adjacent points $(x_{i-1}, y_{i-1})$ and $(x_{i}, y_{i})$, the
\emph{slope} of the segment is $m_i = (y_{i} - y_{i-1})/(x_{i} -
x_{i-1})$ and the \emph{length} of the segment
 is the length of its projection onto the
$x$-axis, taken as $\ell_i = x_i -
  x_{i-1}$. An important use of this is the following.

\begin{fact}[See \cite{Koblitz:1984}, \S IV.3, Lemma 4]
  \label{fact:newtonp}
  Let $f = x^n + f_{1} x_{n-1} + \ldots + f_n \in \ZZ_p[x]$ 
  and $f_n \neq 0$. Let the roots of $f$ (counting multiplicity) be
  $\lambda_1,\ldots,\lambda_{n}$ in
  an extension $\K_p$ over $\QQ_p$.
  If the Newton polygon of $f$ has slopes $m_1,\ldots,m_k$ and
  lengths $\ell_1,\ldots,\ell_k$ as above, then for each $1\leq j\leq k$, $f$
  has exactly $\ell_j$ roots $\lambda\in\K_p$ whose valuation
  $\ord_p(\lambda)=m_j$.
\end{fact}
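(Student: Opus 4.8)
The plan is to prove the Fact by relating the coefficients $f_i$ to the elementary symmetric functions of the roots and then showing that two convex polygons coincide. First I would extend the valuation $v_p$ to all of $\K_p$ (the extension already described above, via minimal polynomials, equivalently the normalized valuation on a finite extension of $\QQ_p$); the properties we use are $v_p(ab)=v_p(a)+v_p(b)$ and the ultrametric inequality $v_p(a+b)\ge\min\{v_p(a),v_p(b)\}$, with equality when $v_p(a)\neq v_p(b)$. Since $f_n\neq 0$, no root is $0$, so all $v_p(\lambda_i)$ are finite; relabel the roots so that $v_p(\lambda_1)\le v_p(\lambda_2)\le\cdots\le v_p(\lambda_n)$. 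Vieta's formulas give $f_i=(-1)^i e_i(\lambda_1,\ldots,\lambda_n)$, where $e_i$ is the $i$th elementary symmetric polynomial, so $v_p(f_i)=v_p\bigl(e_i(\lambda_1,\ldots,\lambda_n)\bigr)$.

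Next, set $P_0=0$ and $P_i=v_p(\lambda_1)+\cdots+v_p(\lambda_i)$ for $1\le i\le n$. Because the root valuations are sorted, the polygonal path $\Pi$ through $(0,P_0),(1,P_1),\ldots,(n,P_n)$ is convex, with slope $v_p(\lambda_i)$ on $[i-1,i]$. I claim $\NP(f)=\Pi$. In the expansion $e_i=\sum_{|T|=i}\prod_{j\in T}\lambda_j$ every term has valuation $\ge P_i$, since the subset $T=\{1,\ldots,i\}$ attains $P_i$ and, the valuations being sorted, minimizes $\sum_{j\in T}v_p(\lambda_j)$; by the ultrametric inequality $v_p(f_i)\ge P_i$, so each point $(i,v_p(f_i))$ lies on or above $\Pi$. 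Moreover, at an index $i$ that is a vertex of $\Pi$, i.e. where $v_p(\lambda_i)<v_p(\lambda_{i+1})$ (and at $i=0$ and $i=n$, the latter using $f_n\neq 0$ and $v_p(f_n)=P_n$), the subset $\{1,\ldots,i\}$ \emph{strictly} dominates: any other $i$-subset $T$ omits some $a\le i$ and contains an equal number of indices $\ge i+1$, each of valuation $\ge v_p(\lambda_{i+1})$, so trading the omitted indices (valuation $\le v_p(\lambda_i)$) for them strictly raises the sum, giving $\sum_{j\in T}v_p(\lambda_j)>P_i$. Hence $v_p(f_i)=P_i$ exactly at the vertices of $\Pi$ (and these points are genuinely present, as $v_p(f_i)=P_i<\infty$ forces $f_i\neq 0$).

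Finally I would combine the two observations. The lower convex hull $g=\NP(f)$ of $\{(0,0)\}\cup\{(i,v_p(f_i)):f_i\neq 0\}$ lies below all those points; since the convex function $\Pi$ also lies below them, $g\ge\Pi$ pointwise. But $g$ lies on or below each vertex of $\Pi$, and there $v_p(f_i)=P_i$, so $g$ agrees with $\Pi$ at every vertex of $\Pi$; a convex function that agrees with the convex piecewise-linear $\Pi$ at all of $\Pi$'s vertices and lies above $\Pi$ must equal $\Pi$ (on each segment between consecutive vertices it is bounded above by the chord, which is $\Pi$ there). Thus $\NP(f)=\Pi$, and each of its segments runs between consecutive vertices of $\Pi$, has slope equal to the common value $m$ of $v_p(\lambda_i)$ on that block of indices, and horizontal length equal to the number of indices with $v_p(\lambda_i)=m$ — i.e. the number of roots of valuation $m$, as claimed.

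I expect the only real obstacle to be bookkeeping: the strict-domination count at a vertex, where ties among root valuations must be handled carefully, and the convexity-matching step, which is elementary but needs the right framing to avoid circularity with the definition of the lower convex hull. Everything else is a direct application of Vieta's formulas and the ultrametric inequality. (Alternatively one could first prove that Newton polygons are multiplicative under products of polynomials and apply this to $f=\prod_i(x-\lambda_i)$, but that reduces to essentially the same computation.)
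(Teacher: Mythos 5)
Your argument is correct, but note that the paper itself gives no proof of this Fact: it is quoted verbatim from \cite{Koblitz:1984} (\S IV.3, Lemma 4), so there is no in-paper proof to compare against. What you wrote is a complete, self-contained version of the standard textbook argument behind that citation: Vieta's formulas to express $v_p(f_i)$ via elementary symmetric functions of the sorted root valuations, the ultrametric inequality giving $v_p(f_i)\ge P_i$ with equality at the vertices (your strict-domination swap argument at indices with $v_p(\lambda_i)<v_p(\lambda_{i+1})$, and at $i=n$ using $f_n\neq 0$, is exactly the delicate point and you handle it correctly), and the convexity-matching step forcing the lower hull to coincide with the path through $(i,P_i)$. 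The only caveats are routine: you implicitly use that $f$ splits over $\K_p$ (which the Fact's hypothesis grants) and that the extended valuation on $\K_p$ is multiplicative and ultrametric, both standard facts about finite extensions of $\QQ_p$ that are consistent with the valuation extension described earlier in the paper.
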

  
We now have all the tools to prove Theorem \ref{thm:pdet->pcosp}.

\setcounter{theorem}{0}
\begin{theorem}
  Let $A \in \MnZ$ and $p$ a prime.  If $A$ is $p$-characterized then $A$
  is $p$-correspondent.
\end{theorem}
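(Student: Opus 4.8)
The plan is to connect the two diagonal invariants — the valuations $\ord_p(\Delta_i)$ of the determinantal divisors and the valuations $\ord_p(\lambda_i)$ of the eigenvalues — by routing both through the Newton polygon $\NP(f)$ of the characteristic polynomial. The hypothesis that $A$ is $p$-characterized says exactly that $\ord_p(f_i)=\ord_p(\Delta_i)$ for $1\le i\le r$, so the points defining $\NP(f)$ are $\{(0,0),(1,\ord_p(\Delta_1)),\ldots,(r,\ord_p(\Delta_r))\}$ (recall $f_{r+1}=\cdots=f_n=0$, so there are no further points). Thus it suffices to show two things: first, that under $p$-characterization the sequence $(i,\ord_p(\Delta_i))_{0\le i\le r}$ is \emph{already convex}, so that every one of these points is a vertex of $\NP(f)$; and second, that the resulting slopes of $\NP(f)$, read with multiplicity according to segment lengths, are precisely $\ord_p(s_1)\le\cdots\le\ord_p(s_r)$. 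Combining this with Fact~\ref{fact:newtonp} then yields $\ord_p(\lambda_i)=\ord_p(s_i)$ for all $i$, which is exactly $p$-correspondence.

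For the convexity step, first I would observe that over $\ZZ_p$ the Smith form gives $\ord_p(\Delta_i)=\ord_p(s_1)+\cdots+\ord_p(s_i)$, and since $s_1\mid s_2\mid\cdots\mid s_r$ we have $\ord_p(s_1)\le\ord_p(s_2)\le\cdots\le\ord_p(s_r)$. Hence the discrete derivative of the sequence $i\mapsto\ord_p(\Delta_i)$ is the nondecreasing sequence $\ord_p(s_i)$, which is the definition of (discrete) convexity of the function $\Delta_\bullet$ on $\{0,1,\ldots,r\}$ with $\ord_p(\Delta_0)=0$. Therefore the lower convex hull of $\{(i,\ord_p(\Delta_i)):0\le i\le r\}$ passes through all of these points, and by $p$-characterization this hull is exactly $\NP(f)$ restricted to $[0,r]$. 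Consequently the segment of $\NP(f)$ from $(i-1,\ord_p(\Delta_{i-1}))$ to $(i,\ord_p(\Delta_i))$ has length $\ell=1$ and slope $m=\ord_p(\Delta_i)-\ord_p(\Delta_{i-1})=\ord_p(s_i)$.

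The final step is just bookkeeping with Fact~\ref{fact:newtonp}. Since $A$ is an integer matrix of rank $r$, the characteristic polynomial $f\in\ZZ[x]\subseteq\ZZ_p[x]$ has $f_n$ possibly zero but $f_r\ne 0$ exactly when $\ord_p(\Delta_r)<\infty$ — and indeed we are reading valuations of the nonzero $s_i$, so $\Delta_r\ne 0$; applying the Fact to the degree-$r$ polynomial $f/x^{n-r}$ (equivalently, to the nonzero roots), each unit-length segment of slope $\ord_p(s_i)$ contributes exactly one nonzero root $\lambda$ with $\ord_p(\lambda)=\ord_p(s_i)$. Ordering the nonzero eigenvalues by valuation as in hypothesis (ii), we get $\ord_p(\lambda_i)=\ord_p(s_i)$ for $1\le i\le r$, which is $p$-correspondence. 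I expect the only real subtlety — and hence the part to state carefully rather than wave at — is the reduction handling the $n-r$ zero eigenvalues and the possibility that $f_n=0$ while $f_r\ne 0$: one must make sure Fact~\ref{fact:newtonp} is being applied to the right polynomial (the one whose roots are the nonzero eigenvalues) so that segment lengths sum to $r$, not $n$, and that the $p$-characterized hypothesis is only claimed for indices $i\le r$. Everything else is the chain of identities $\ord_p(f_i)=\ord_p(\Delta_i)$, $\ord_p(\Delta_i)-\ord_p(\Delta_{i-1})=\ord_p(s_i)$, and monotonicity of $\ord_p(s_i)$.
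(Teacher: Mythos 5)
Your proposal is correct and takes essentially the same route as the paper: both observe that $p$-characterization makes the Newton polygon's defining points equal to $(i,\ord_p(\Delta_i))$, that these points form a convex sequence because $\ord_p(\Delta_i)-\ord_p(\Delta_{i-1})=\ord_p(s_i)$ is nondecreasing (from $s_1\mid s_2\mid\cdots\mid s_r$), and then read off the eigenvalue valuations via Fact~\ref{fact:newtonp}. The paper phrases the convexity argument more concretely, grouping the invariant factor valuations into blocks of multiplicities $r_0,\ldots,r_{e-1}$ and exhibiting the resulting $e$ segments of integer slope $0,1,\ldots,e-1$, whereas you argue directly from discrete convexity and treat each unit step as its own segment; the two descriptions of the polygon are the same. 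If anything, your handling of the singular case is slightly more careful than the paper's: you explicitly flag that Fact~\ref{fact:newtonp} is stated for $f_n\neq0$ and should be applied to the degree-$r$ factor $f/x^{n-r}$ (equivalently, to the nonzero roots), while the paper just observes afterwards that the segment lengths sum to $r=\rank(A)$ and so account for all nonzero roots.
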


\begin{proof}
  Assume that $A$ is $p$-characterized with rank $r$ and characteristic
  polynomial $f = \sum_{0\leq i\leq r} f_i x^{n-i} \in \ZZ[x]$, and
  $A$ has Smith form $S = \diag(s_1,\ldots,s_r,$ $0,\ldots,0)\in\MnZ$.
  Also, assume that the $p$-adic valuations of the invariant factors
  $s_1,\ldots,s_r$ have multiplicities $r_0,\ldots,r_{e-1}$ as
  follows:
  \[
  (\ord_p(s_1), \ldots, \ord_p(s_r)) = (\underbrace{0,\ldots,0}_{r_0},
  \underbrace{1,\ldots,1}_{r_1}, \ldots , \underbrace{e-1,\ldots,e-1}_{r_{e-1}}),
  \]
  where $e=\ord_p(s_r)+1$.  Since $A$ is $p$-characterized, by definition
  we have for $1\leq i\leq r$ that
  \[
  \ord_p(f_i) = \ord_p(\Delta_i) = \sum_{1\leq j\leq i} \ord_p(s_j),
  \]
  for all $1\leq i\leq r$.
  For notational convenience, define $m_i$ as
  \[
   m_i=\ord_p(\Delta_{r_0+r_1+\cdots+r_i}) = r_1+2r_2+\cdots + i\cdot
   r_i \ .
  \]
  Grouping the non-zero coefficients of $f$ by $p$-adic
  valuation we then have
  \begin{align*}
    (\ord_p(f_1), & \ldots,\ord_p(f_r)) \\
           & = \Bigl( 
    \underbrace{0,\ldots,0}_{r_0},\
    \underbrace{1,2,3,\ldots,r_1}_{r_1},\
    \underbrace{m_1+2, m_1+4, \ldots, m_1+2r_2}_{r_2},\\
    & \hspace*{23pt} \ldots,\  \underbrace{m_{e-2}+(e-1),\ 
     m_{e-2}+2(e-1),
      \ldots,\ m_{e-2}+r_{e-1}}_{r_{e-1}}(e-1)\ \Bigr).
  \end{align*}
  $\NP(f)$ is easily seen to consist of $e$ segments, where segment $i$
  has slope $i$, and length $r_i$, for $0\leq i<e$
  (a segment $i$ may have length 0 if $r_i=0$). Thus, by Fact
  \ref{fact:newtonp}, $f$ has $r_i$ roots $\lambda$ with
  $\ord_p(\lambda)=i$.  This accounts for all the non-zero roots of
  $f$, since $r_0+r_1+\cdots+r_{e-1}=\rank(A)$. Since these roots are
  the non-zero eigenvalues of $A$, we immediately see that $A$ is
  $p$-correspondent.
\end{proof}

It should be noted that the converse of Theorem~\ref{thm:pdet->pcosp} is not
necessarily true. The matrix in the following example is $p$-correspondent but
not $p$-characterized.


\begin{example}\label{ex:converse}
  The invariant factors of
  \[ A =
  \begin{pmatrix}
    -20 & -2 & 81 & -388 \\
    18 & -6 & -84 & 375 \\
    7 & 34 & 3 & 41 \\
    13004 & -11695 & -64944 & 289315
  \end{pmatrix},
  \]
  are $[1, 3, 3, 9]$, and the $3$-adic eigenvalues are: 
  \[ 
  2 + O(3),\  2 \cdot 3 +
  O(3^3),\ 3 + O(3^2),\ 3^2 + O(3^3). 
  \] 
  However, the $3$-adic valuation of the
  determinantal divisors is $[0, 1, 2, 4]$ and the characteristic polynomial
  over $\ZZ_3[x]$ is: 
  \[ 
  x^4 + (1 + O(3)) x^3 + (2 \cdot 3^2 + O(3^6)) x^2 + (2
  \cdot 3^2 + O(3^3)) x + (3^4 + O(3)). 
  \] 
  This is due to the fact that the Newton
  polygon of $A$ is the \emph{convex hull} of the segments defined by the
  coefficients of characteristic polynomial.
  \begin{center}
    \begin{tikzpicture}
      \draw[step=1cm, very thin] (0, 0) grid (4.5, 4.5);
      \foreach \x in {0, 1, 2, 3, 4}
      \draw (\x cm, 1pt) -- (\x cm, -1pt) node[anchor = north] {$f_\x$};
      \foreach \y in {0, 1, 2, 3, 4}
      \draw (1pt, \y cm) -- (-1pt, \y cm) node[anchor = east] {\y};
      \foreach \point in {(0, 0), (1, 0), (2, 2), (3, 2), (4, 4)}
      \node at \point {$\bullet$};
      \draw[very thick, dashed] (0, 0) -- (1, 0) -- (2, 2) -- (3, 2) -- (4, 4);
    \end{tikzpicture}
    \begin{tikzpicture}
      \draw[step=1cm, very thin] (0, 0) grid (4.5, 4.5);
      \foreach \x in {0, 1, 2, 3, 4}
      \draw (\x cm, 1pt) -- (\x cm, -1pt) node[anchor = north] {$f_\x$};
      \foreach \y in {0, 1, 2, 3, 4}
      \draw (1pt, \y cm) -- (-1pt, \y cm) node[anchor = east] {\y};
      \foreach \point in {(0, 0), (1, 0), (2, 2), (3, 2), (4, 4)}
      \node at \point {$\bullet$};
      \draw[very thick] (0, 0) -- (1, 0) -- (3, 2) -- (4, 4);
    \end{tikzpicture}
  \end{center}
  While the coefficients of the characteristic polynomial (points in left
  figure) do not correspond to the $3$-adic valuations of the determinantal
  divisors, their lower convex cover (segments in right figure) corresponds to
  the $3$-adic valuations of the invariant factors with slopes: $0$, $1$
  (twice), and $2$. \qed
\end{example}

We now prove the two simple Lemmas \ref{lem:exists-equiv} and
\ref{lem:exists-sim}, establishing $p$-correspondence under unimodular
equivalence transformations and similarity.

\setcounter{lemma}{0}
\begin{lemma}
  Let $A\in\MnZ$ and $p$ be any prime. There exists an equivalence transformation
  $P,Q \in \GL_n(\ZZ)$ such that $PAQ$ is $p$-correspondent.
\end{lemma}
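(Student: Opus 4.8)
The plan is to take the Smith form $A = USV$ with $U,V$ unimodular, and observe that the diagonal matrix $S = \diag(s_1,\ldots,s_r,0,\ldots,0)$ is already $p$-correspondent: its invariant factors are $s_1,\ldots,s_r$ by construction, and its non-zero eigenvalues are exactly $s_1,\ldots,s_r$, so trivially $\ord_p(s_i) = \ord_p(\lambda_i)$ for all $i$. The difficulty is that applying $U$ and $V$ preserves the Smith form (hence the invariant factors) but destroys the eigenvalues, so $S$ itself is not of the form $PAQ$ unless we are willing to take $P = U^{-1}$, $Q = V^{-1}$ — and indeed that is exactly the choice I would make: set $P = U^{-1}$ and $Q = V^{-1}$, both in $\GL_n(\ZZ)$ since $U,V$ are unimodular, and then $PAQ = U^{-1}(USV)V^{-1} = S$, which is $p$-correspondent as just noted.

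So concretely, the proof is: write $A = USV$ with $U, V \in \GL_n(\ZZ)$ and $S$ the Smith form of $A$; put $P = U^{-1} \in \GL_n(\ZZ)$ and $Q = V^{-1} \in \GL_n(\ZZ)$; then $PAQ = S$. It remains to check $S$ is $p$-correspondent. Since $S$ is diagonal with entries $s_1,\ldots,s_r,0,\ldots,0$ and $s_1 \mid \cdots \mid s_r$, the Smith form of $S$ is $S$ itself, so the invariant factors of $S$ are $s_1, \ldots, s_r$. Likewise the characteristic polynomial of $S$ is $x^{n-r}\prod_{i=1}^r (x - s_i)$, so the non-zero eigenvalues of $S$ are precisely $s_1, \ldots, s_r$, already lying in $\QQ_p$ (indeed in $\ZZ$) with no field extension needed. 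Ordering them by $p$-adic valuation is automatic because $s_i \mid s_{i+1}$ forces $\ord_p(s_i) \le \ord_p(s_{i+1})$. Hence $\ord_p(s_i) = \ord_p(\lambda_i)$ for all $i \in [1,r]$, which is exactly the definition of $p$-correspondence.

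There is essentially no obstacle here — the only thing to be careful about is the bookkeeping that the $i$th invariant factor of the diagonal matrix $S$ really is $s_i$ (this uses the divisibility chain $s_1 \mid s_2 \mid \cdots \mid s_r$, so that $S$ is already in Smith normal form and no further reduction occurs) and the matching observation about the characteristic polynomial of a triangular (here diagonal) matrix. One might remark in passing that this lemma is in a sense the degenerate case: the transformation we use is precisely the one realizing the Smith form, and the content of the later density results (Theorem~\ref{thm:cospdense}) is that for most $A$ one does not need any transformation at all.
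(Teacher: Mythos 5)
Your proof is correct and is essentially identical to the paper's: both write $A = USV$ in Smith form, take $P = U^{-1}$, $Q = V^{-1}$ so that $PAQ = S$, and observe that the diagonal matrix $S$ has its invariant factors and eigenvalues coincide. Your extra remark that the divisibility chain $s_1 \mid \cdots \mid s_r$ automatically produces the required ordering of valuations is a nice piece of bookkeeping the paper leaves implicit.
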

\begin{proof}
  Simply choose $P,Q \in \GL_n(\ZZ)$ such that $PAQ$ is in Smith normal form
  $S=\diag(s_1,\ldots,s_r,0,\ldots,0)$.  Then the eigenvalues of $PAQ$ are
  $s_1,\ldots,s_r$.
\end{proof}

\begin{lemma}
  Let $A \in \MnZ$ be non-singular, $p$ be any prime.  There exists a similarity
  transformation $U$ with entries in an extension $\K_p$ over $\QQ_p$ such that
  $U^{-1}AU$ is $p$-correspondent.
\end{lemma}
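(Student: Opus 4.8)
The plan is to pass to a finite extension $\K_p$ of $\QQ_p$ that splits the characteristic polynomial of $A$, conjugate $A$ into upper triangular form over $\K_p$, and then conjugate further by a diagonal matrix built from powers of $p$ so as to make the off-diagonal entries have very large valuation; the resulting matrix is then, as far as its determinantal divisors are concerned, essentially $\diag(\lambda_1,\dots,\lambda_n)$, and those divisors are exactly what $p$-correspondence measures. Concretely, let $\K_p$ be the splitting field over $\QQ_p$ of the characteristic polynomial $f$ of $A$; it is a finite extension, and its valuation ring $\O=\O_{\K_p}$ is a complete discrete valuation ring, hence a PID, so matrices over $\O$ have a Smith form and the determinantal divisors $\Delta_i$ and invariant factors $s_i$ over $\O$ are well-defined with $\ord_p(s_i)=\ord_p(\Delta_i)-\ord_p(\Delta_{i-1})$; this is the sense in which $U^{-1}AU$ is to be $p$-correspondent. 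Since $A$ is non-singular, its eigenvalues $\lambda_1,\dots,\lambda_n$ are nonzero algebraic integers, so they lie in $\O\setminus\{0\}$, and we may assume them ordered so that $\ord_p(\lambda_1)\le\cdots\le\ord_p(\lambda_n)$. Over $\K_p$, which contains all the eigenvalues, $A$ is similar to an upper triangular matrix with any prescribed ordering of the eigenvalues along the diagonal, so choose $V\in\GL_n(\K_p)$ with $T=V^{-1}AV=(t_{ij})$ upper triangular and $t_{ii}=\lambda_i$.

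Next I would conjugate by the diagonal matrix $D=\diag(p^{N},p^{2N},\dots,p^{nN})\in\GL_n(\K_p)$ for a positive integer $N$ chosen below, and set $U=VD\in\GL_n(\K_p)$ and $B=U^{-1}AU=D^{-1}TD$. Then $B$ is again upper triangular with $B_{ii}=\lambda_i$, while for $i<j$ we have $B_{ij}=p^{N(j-i)}t_{ij}$, so $\ord_p(B_{ij})\ge N+\ord_p(t_{ij})$. Write $L=\ord_p(\det A)=\sum_{k=1}^{n}\ord_p(\lambda_k)$. Since $T$ has only finitely many nonzero entries, choosing $N$ sufficiently large (explicitly, $N>L-\ord_p(t_{ij})$ for each nonzero super-diagonal entry $t_{ij}$) forces every super-diagonal entry of $B$ to have valuation strictly greater than $L$. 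In particular $B$ has all of its entries in $\O$, so its Smith form over $\O$ is defined.

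It remains to show $B$ is $p$-correspondent, i.e.\ that $\ord_p(\Delta_i(B))=\sum_{k=1}^{i}\ord_p(\lambda_k)$ for each $i$, whence $\ord_p(s_i(B))=\ord_p(\lambda_i)$ and $B$ — which has the same eigenvalues as $A$ — is $p$-correspondent. For the upper bound, the principal $i\times i$ minor $B\binom{\sigma}{\sigma}$ with $\sigma=(1,\dots,i)$ equals $\lambda_1\cdots\lambda_i$ by upper-triangularity, of valuation $\sum_{k\le i}\ord_p(\lambda_k)$. For the lower bound, expand an arbitrary minor $B\binom{\sigma}{\tau}$ with $\sigma,\tau\in\calC_i^n$ by the Leibniz formula: if $\sigma\ne\tau$ then every nonzero term contains at least one off-diagonal factor of $B$, hence has valuation $>L\ge\sum_{k\le i}\ord_p(\lambda_k)$; if $\sigma=\tau$ the minor is the product $\prod_{k}\lambda_{\sigma_k}$, and since $\sigma_k\ge k$ and the $\lambda_k$ are sorted by valuation, this has valuation $\ge\sum_{k\le i}\ord_p(\lambda_k)$. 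Combining these gives $\ord_p(\Delta_i(B))=\sum_{k\le i}\ord_p(\lambda_k)$, as claimed.

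The only real work is this last step — controlling the valuations of all the minors of $B$ simultaneously — and it is precisely here that $N$ must be chosen relative to $\ord_p(\det A)$ rather than merely ``large'': we need every off-diagonal contribution to be beaten by the principal minor at every level $i\le n$ at once. The remaining ingredients (existence of a triangular form over a splitting field of $f$, and $\O_{\K_p}$ being a discrete valuation ring) are standard.
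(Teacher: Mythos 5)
Your proof is correct, but it follows a genuinely different route from the paper's. The paper passes to Jordan normal form $J=\diag(J_1,\ldots,J_\ell)$ over a splitting field $\K_p$ and then conjugates each block $J_i$ (with eigenvalue $\mu_i\neq0$) by $\diag(1,\mu_i^{-1},\ldots,\mu_i^{-(k_i-1)})$, turning the $1$'s on the super-diagonal into $\mu_i$'s; a short sequence of unimodular column operations then reveals the Smith form of that block to be $\mu_i I$, and assembling the blocks gives a $p$-correspondent matrix directly. You instead use Schur triangularization with the eigenvalues ordered by valuation, then conjugate by $\diag(p^N,p^{2N},\ldots,p^{nN})$ with $N$ chosen so that every strictly-upper entry acquires valuation exceeding $\ord_p(\det A)$, and finally verify $\ord_p(\Delta_i)=\sum_{k\le i}\ord_p(\lambda_k)$ by a case analysis on the Leibniz expansion of the $i\times i$ minors. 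Both arguments use essentially the same conjugation-by-a-diagonal trick, but the paper's choice of scaling is tailored to the block structure and makes the Smith form fall out with no minor computation, whereas yours uses a cruder ``push everything off-diagonal past the determinant'' scaling and has to earn the determinantal-divisor identity by bounding all minors, with the upper bound from the leading principal minor and the lower bound using that the $\lambda_k$ are sorted by valuation and $\sigma_k\ge k$. Your version has the minor advantage of not invoking Jordan form (a flag triangularization suffices) and of making explicit that the Smith form is being computed over the valuation ring $\O_{\K_p}$, which the paper leaves implicit; the paper's version is more economical and exhibits the Smith form itself rather than just its valuations. Both rely on $A$ being non-singular (the paper because it divides by $\mu_i$, you because $L=\ord_p(\det A)$ must be finite and the eigenvalues must lie in $\O\setminus\{0\}$).
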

\begin{proof}
  Choose $\K_p$ to be a splitting field of the minimal polynomial of $A$.
  It is well-known that $A$ is similar to a matrix $J \in \K_p^{n \times n}$ in
  Jordan form.  That is, there exists an invertible $W \in \K_p^{n \times n}$ such
  that $W^{-1}AW = \diag(J_1,\ldots,J_\ell)$ where
  \[
  J_i = \begin{pmatrix}
    \mu_i & 1 & \\
    & \ddots & \ddots\\
    & & \ddots & 1\\
    &        & & \mu_i
  \end{pmatrix},
  \]
  for some (not necessarily unique) eigenvalue $\mu_i \in \K_p$ of $A$,
  and $J_i$ has dimensions $k_i \times k_i$.
  However, we can choose an alternative Jordan block $\Jhat_i$,
  similar to $J_i$, by applying
  the similarity transformation 
    $\diag(1, 1/\mu_i, \ldots, 1/\mu_i^{k_i-1})$ to $J_i$ to get
  \[
  \Jhat_i = \begin{pmatrix}
    \mu_i & \mu_i & \\
    & \ddots & \ddots\\
    & & \ddots & \mu_i\\
    &        & & \mu_i
    \end{pmatrix}.
  \]
  The Smith
    form of $\Jhat_i$ can obtained as follows. Subtract the first column from
    the second column. Then subtract the second column from the third, and so
    forth. The resulting matrix is $\diag(\mu_i,\ldots,\mu_i)$ which is in
    Smith normal form. Therefore $\Jhat_i$ is $p$-correspondent.

    Combining together the different Jordan blocks to form an
    alternative Jordan form $\Jhat$ for $A$, we see that $\Jhat$ is
    $p$-correspondent, and similar to $A$, as required.
\end{proof}

Note that if $A$ is singular, Lemma \ref{lem:exists-sim} may not
hold.  Consider for example
\[
A=\begin{pmatrix}
  0 & 1\\
  0 & 0
\end{pmatrix},
\]
whose only eigenvalue is zero, with multiplicity two.
This is also the case for any matrix similar to $A$.
  However, this
matrix has rank one, and so one of the invariant factors must always
be non-zero.


\section{Density of $p$-characterized matrices}
\label{sec:density}

This section provides the proof for Theorem \ref{thm:cospdense}.  We
show that most matrices which are unimodularly equivalent to a matrix
$A\in\MnZ$, are $p$-characterized (and hence $p$-correspondent)
when $p$ is large compared to $n$.
The main tool is the following.

In what follows, $\cont(g)$ denotes the content of a polynomial $g$, that
is, the GCD of the coefficients of $g$.

\begin{lemma}
  \label{lem:content-coefficients}
  Let $A \in \MnZ$ have rank $r$.  Let $\U, \V$ be $\nxn$ matrices whose
  $2n^2$ entries are algebraically independent indeterminates $u_{ij}$
  and $v_{ij}$ respectively.
  Let $g_k$ be the coefficient of $x^{n-k}$ in
  the characteristic polynomial of $B = \U A \V$.
  Then for $k \in [1, r]$, $g_k$ is a polynomial of total degree $2k$
  and $\cont(g_k)$ is $\Delta_k$, the $k$th determinantal
  divisor of $A$.
\end{lemma}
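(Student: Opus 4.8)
The plan is to express each coefficient $g_k$ of the characteristic polynomial of $B = \U A \V$ via the minor-sum formula \eqref{eq:fi-sum-minors}, namely $g_k = (-1)^k \sum_{\sigma \in \calC_k^n} B\binom{\sigma}{\sigma}$, and then to apply the Cauchy–Binet formula to each diagonal minor $B\binom{\sigma}{\sigma}$ of the triple product $\U A \V$. Writing $B\binom{\sigma}{\sigma} = \sum_{\alpha, \beta \in \calC_k^n} \U\binom{\sigma}{\alpha}\, A\binom{\alpha}{\beta}\, \V\binom{\beta}{\sigma}$, we get
\[
g_k = (-1)^k \sum_{\alpha,\beta \in \calC_k^n} A\binom{\alpha}{\beta} \Bigl( \sum_{\sigma \in \calC_k^n} \U\binom{\sigma}{\alpha}\, \V\binom{\beta}{\sigma} \Bigr).
\]
Each $\U\binom{\sigma}{\alpha}$ is a $k\times k$ minor of $\U$, hence a homogeneous polynomial of degree $k$ in the $u_{ij}$, and likewise $\V\binom{\beta}{\sigma}$ has degree $k$ in the $v_{ij}$; so every term of $g_k$ has total degree exactly $2k$, giving the degree claim. (I should also note $g_k$ is not identically zero for $k \le r$: since $A$ has rank $r$, some minor $A\binom{\alpha}{\beta}$ with $|\alpha|=|\beta|=k$ is nonzero, and the coefficient polynomials $\sum_\sigma \U\binom{\sigma}{\alpha}\V\binom{\beta}{\sigma}$ in distinct indeterminate families are linearly independent, so no cancellation occurs.)

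For the content, the upper bound $\Delta_k \mid \cont(g_k)$ is immediate: every $A\binom{\alpha}{\beta}$ appearing above is divisible by $\Delta_k$ (the GCD of all $k\times k$ minors of $A$), and the bracketed polynomials have integer coefficients, so $\Delta_k$ divides every coefficient of $g_k$. For the reverse divisibility $\cont(g_k) \mid \Delta_k$, I would exhibit, for each pair $(\alpha,\beta)$, a monomial in the $u$'s and $v$'s whose coefficient in $g_k$ is (up to sign) exactly $A\binom{\alpha}{\beta}$ — then $\cont(g_k)$ divides each such minor, hence divides their GCD $\Delta_k$. Concretely, the diagonal minor $\U\binom{\alpha}{\alpha}$ contributes the monomial $\prod_{i} u_{\alpha_i \alpha_i}$ and $\V\binom{\beta}{\beta}$ the monomial $\prod_i v_{\beta_i \beta_i}$; I want to choose $(\sigma, \alpha, \beta)$-combinations so that the monomial $u_{11}u_{22}\cdots$ (a specific square-free monomial supported on the diagonal positions of $\alpha$, resp. $\beta$) isolates a single term. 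The cleanest route: pick $\sigma = \alpha$ in the $\U$-minor and $\sigma = \beta$ in the $\V$-minor is impossible simultaneously unless $\alpha = \beta$, so instead track the coefficient of $\bigl(\prod_i u_{i,\alpha_i}\bigr)\bigl(\prod_i v_{\beta_i, i}\bigr)$ for suitable labelings — here one identifies $\sigma$ with $(1,\dots,k)$ after reindexing, so that $\U\binom{(1..k)}{\alpha}$ and $\V\binom{\beta}{(1..k)}$ each contribute their "identity-permutation" monomial uniquely, and the coefficient of that monomial in $g_k$ is $\pm A\binom{\alpha}{\beta}$ with no other contribution (because any other $(\alpha',\beta')$ pairing would require different $u$- or $v$-variables).

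The main obstacle is this last isolation step: making precise the bookkeeping that a chosen monomial in the $2n^2$ indeterminates picks out exactly one minor $A\binom{\alpha}{\beta}$ and not a sum of several. The subtlety is that a fixed multidegree in the $u_{ij}$ can arise from the expansion of several different minors $\U\binom{\sigma}{\alpha}$ (as one ranges over $\sigma$), so I must choose the target monomial carefully — e.g. a monomial supported on entries $u_{i,\alpha_i}$ for $k$ distinct rows $i$ that forces both $\sigma$ (the row-set of $\U$, equivalently the column-set acted on by $A$ on the left) and the particular permutation in the Leibniz expansion. Once the right monomial is pinned down, verifying that its $g_k$-coefficient equals $\pm A\binom{\alpha}{\beta}$ is a routine (if notation-heavy) check, and combining "$\Delta_k \mid \cont(g_k)$" with "$\cont(g_k) \mid A\binom{\alpha}{\beta}$ for all $\alpha,\beta$, hence $\cont(g_k)\mid \Delta_k$" yields $\cont(g_k) = \Delta_k$, completing the proof.
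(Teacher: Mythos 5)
Your proposal is correct and matches the paper's proof essentially step for step: the same Cauchy--Binet expansion of $g_k$ into $\sum_{\alpha,\beta} A\binom{\alpha}{\beta}\,\Upsilon_{\alpha,\beta}$, the same observation that $\Delta_k$ divides everything for one direction, and the same ``distinguished monomial'' device for the other --- fixing $\sigma=(1,\ldots,k)$ and the identity permutation so that $\bigl(\prod_i u_{i,\alpha_i}\bigr)\bigl(\prod_i v_{\beta_i,i}\bigr)$ appears with coefficient $\pm A\binom{\alpha}{\beta}$ and in no other term of $g_k$. The paper carries out precisely the isolation bookkeeping you flag as the remaining obstacle (the monomial's row indices pin down $\sigma$, its column indices pin down $\alpha,\beta$, and the permutation structure rules out cross-terms), and packages it as ``$\Upsilon_{\alpha,\beta}$ is nonzero of degree $2k$ with content $1$ and has a monomial not appearing in any other $\Upsilon_{\alpha',\beta'}$,'' from which both the degree claim and $\cont(g_k)=\Delta_k$ follow at once.
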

\begin{proof}
  Assume throughout that $k \leq r$. Using the Cauchy-Binet formula,
  \begin{equation}
    \label{eq:fi-cauchy}
    \begin{aligned}
      g_k & = (-1)^k \sum_{\sigma \in \calC_k^n} B \binom{\sigma}{\sigma} =
      (-1)^k \sum_{\sigma,
        \tau, \omega \in \calC_k^n} \U \binom{\sigma}{\tau} A
      \binom{\tau}{\omega} \V \binom{\omega}{\sigma} \\
      & = (-1)^k \sum_{\tau, \omega \in \calC_k^n} A
      \binom{\tau}{\omega} \Upsilon_{\tau,\omega},
      ~~~\mbox{where}~~~
    \Upsilon_{\tau,\omega}=
    \sum_{\sigma\in\calC_k^n}
    \U \binom{\sigma}{\tau}\V \binom{\omega}{\sigma}.
    \end{aligned}
  \end{equation}

  We first show that $\Upsilon_{\tau,\omega}$ has content $1$.  By
  Leibniz's determinant expansion on the minor of $\U$ selected
  by the first $k$ rows, and the columns
  given by the indices in $\tau\in\calC_k^n$,  we have
  \begin{align*}
    \U\binom{(1,2,...,k)}{\tau} & = \sum_{\mu\in S_k} \sgn(\mu) \prod_{1\leq i\leq k} u_{\mu_i,\tau_i}\\
    & = u_{1,\tau_1}u_{2,\tau_2}\cdots
    u_{k,\tau_k} + \sum_{\substack{\mu\in S_k\\\mu\neq
        \textrm{id}}}\prod_{1\leq i\leq k} \sgn(\mu)\, u_{\mu_i,\tau_i},
    \end{align*}
  where $S_k$ is the symmetric group of permutations of $k$ symbols,
  $(\mu_1,\ldots,\mu_k)$ is a permutation of $\{1,\ldots,k\}$ and
  $\mbox{id}=(1,\ldots,k)$ is the identity permutation.
  Similarly,
  \[
    \V\binom{\omega}{(1,\ldots,k)}  =
    v_{\omega_1,1}v_{\omega_2,2}\cdots
    v_{\omega_k,k} + \sum_{\substack{\mu\in S_k\\\mu\neq
        \textrm{id}}}\prod_{1\leq i\leq k} \sgn(\mu)\, v_{\omega_i,\mu_i}.
    \]
    We observe that $\U\binom{(1,2,...,k)}{\tau}$ contains the
    distinguished monomial
    $u_{1,\tau_1}\cdots u_{k,\tau_k}$ which is not found
    in any of the remaining terms of the expansion of
    $\U\binom{(1,\ldots,k}{\tau}$ and hence has coefficient $1$ (since each
    permutation $\mu$ is distinct), and is not found in the expansion
    of $\U\binom{\sigma'}{\tau'}$ for any other
    $\sigma',\tau'\in\calC^n_k$ (since the variables in the term allow
    us to identify the subsets $\sigma'$ and $\tau'$).  Similarly,
    $\V\binom{\omega}{(1,\ldots,k)}$ contains the distinguished monomial
    $v_{\omega_1,1}\cdots v_{\omega_k,k} $ with
    coefficient 1 which is not found in $\V\binom{\omega'}{\sigma'}$
    for any other $\omega',\sigma'\in\calC^n_k$.

    Thus, for every choice of $\tau,\omega$, the polynomial
    $\Upsilon_{\tau,\omega}$ has a monic
    distinguished term
    $u_{1,\tau_1}\cdots u_{k,\tau_k}v_{\omega_1,1}\cdots
    v_{\omega_k,k} $
    not appearing in $\Upsilon_{\tau',\omega'}$ for any other
    $\tau',\omega'\in\calC_k^n$.  Thus $\Upsilon_{\tau,\omega}$ is
    non-zero, has degree $2k$, and has content 1.

    It follows immediately that $g_k$ has degree $2k$ and content
    which is the GCD of all $A \binom{\tau}{\omega}$, which is
    precisely $\Delta_k$.
\end{proof}

A related result is found in \cite[Theorem 1.4]{Gie01}.  A similar
technique is used in \citep[Theorem~2]{Kaltofen:1991}, where a minor
with symbolic entries is explicitly selected and shown to be
lexicographically unique and hence the resulting polynomial,
e.g. $g_k$, is shown to be non-zero.

The following lemma is used to count the number of matrices with a given
property. While this result resembles the well-known Schwartz-Zippel lemma
\citep{Zippel:1979, Schwartz:1980}, similar statements can be traced to earlier
literature, for example in \citep{Kasami:1968}.

\begin{lemma}\label{prop:schwartz-zippel}
  Let $p$ be a prime, $\ell \ge 1$ be an integer, and $g \in \ZZ[x_1, \ldots,
    x_n]$ be a non-zero polynomial of total degree $k$.  Then the number of
  points $\alpha = (\alpha_1, \ldots, \alpha_n) \in [0, \ell p)^n$ for which
    $g(\alpha) \equiv 0 \pmod{p}$ is at most $\ell^n k p^{n - 1}$.
\end{lemma}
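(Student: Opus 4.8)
The plan is to split the estimate into a count modulo $p$ and a lifting count. Let $\bar g\in\FFp[x_1,\ldots,x_n]$ be the image of $g$ under coefficient-wise reduction modulo $p$; I may assume $\bar g\neq 0$, which holds in every application of this lemma, and then $\deg\bar g\le k$. The key point is that the reduction map $[0,\ell p)^n\to\FFp^n$, $\alpha\mapsto\bar\alpha$, is exactly $\ell^n$-to-one, because each residue class modulo $p$ has precisely $\ell$ representatives in $[0,\ell p)$; moreover $g(\alpha)\equiv 0\pmod p$ if and only if $\bar g(\bar\alpha)=0$ in $\FFp$. Hence
\[
  \#\{\,\alpha\in[0,\ell p)^n : g(\alpha)\equiv 0 \pmod p\,\}
  \;=\; \ell^n\cdot\#\{\,\beta\in\FFp^n : \bar g(\beta)=0\,\},
\]
and the problem reduces to bounding the number of $\FFp$-rational zeros of a nonzero polynomial of total degree at most $k$ by $kp^{n-1}$.

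This last bound is the Schwartz--Zippel lemma specialized to the field $\FFp$ (where $|\FFp|=p$), and I would either invoke it directly or include the standard short induction on $n$. For $n=1$, a nonzero univariate polynomial of degree $\le k$ over a field has at most $k=kp^{0}$ roots. For $n>1$, write $\bar g=\sum_{i=0}^{d} g_i(x_1,\ldots,x_{n-1})\,x_n^{\,i}$ with $g_d\neq 0$ and $d\le k$, so that $\deg g_d\le k-d$; by induction $g_d$ has at most $(k-d)p^{n-2}$ zeros in $\FFp^{n-1}$, over each of which there are $\le p$ values of $x_n$, while over each of the remaining $\le p^{n-1}$ points $\beta'$ with $g_d(\beta')\neq 0$ the polynomial $\bar g(\beta',x_n)$ is univariate of degree $d$ and so has at most $d$ roots. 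The total is at most $(k-d)p^{n-2}\cdot p+p^{n-1}\cdot d=kp^{n-1}$, closing the induction.

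Putting the two counts together gives the claimed bound $\ell^n k p^{n-1}$. I do not expect a real obstacle here: the only steps needing a little care are verifying that the reduction map $[0,\ell p)^n\to\FFp^n$ is uniformly $\ell^n$-to-one and that vanishing modulo $p$ is constant on each of its fibers, together with the (purely expository) decision of whether to cite Schwartz--Zippel as a black box or reproduce the one-paragraph induction; I would include the induction so that the lemma is self-contained.
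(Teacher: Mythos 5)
Your proof takes essentially the same route as the paper's: decompose $[0,\ell p)^n$ into $\ell^n$ translates of $[0,p)^n$ (equivalently, observe the reduction map to $\FFp^n$ is uniformly $\ell^n$-to-one and vanishing mod $p$ is constant on fibers), then invoke the Schwartz--Zippel bound $kp^{n-1}$ over $\FFp$; the paper cites Corollary~1 of Schwartz (1980) for the $\ell=1$ base case rather than reproducing the induction, but that is purely expository. One useful thing you noticed and the paper leaves implicit: the bound actually requires $g$ to remain nonzero after reduction modulo $p$ (``nonzero over $\ZZ$'' is not enough --- take $g=p$, of degree $0$, where every point is a root). You correctly observe that every application in the paper feeds in a polynomial of content $1$, so $\bar g\neq 0$ holds there, but strictly speaking this hypothesis should appear in the lemma's statement.
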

\begin{proof}
  As a shorthand, we call $\alpha \in \ZZ^n$ a \emph{$p$-root} if $f(\alpha)
  \equiv 0 \pmod{p}$. For $\ell = 1$ the statement of the lemma becomes exactly
  Corollary~1 of \citep{Schwartz:1980}: the number of $p$-roots in the cube $[0,
  p)^n$ is at most $k p^{n - 1}$.

  Now assume $\ell > 1$. Every $p$-root $b \in [0, \ell p)^n$ can be written
  with component-wise Euclidean division as $(b_1, \ldots, b_n) = (\alpha_1 +
  r_1 p, \ldots, \alpha_n + r_n p) = \alpha + (r_1 p, \ldots, r_n p)$ where $r_i
  \in [0, \ell - 1)$ and $\alpha = (\alpha_1, \ldots, \alpha_n) \in [0,
  p)^n$. Then $\alpha$ must be a $p$-root because $b \equiv \alpha \pmod{p}$.
  Conversely if $\alpha = (\alpha_1, \ldots, \alpha_n) \in [0, p)^n$ is a
  $p$-root, then $(\alpha_1 + r_1 p, \ldots, \alpha_n + r_n p) \in [0, \ell
  p)^n$ is a $p$-root for all the $\ell^n$ possible values of $(r_1, \ldots,
  r_n) \in [0, \ell)^n$. Thus there are at most $\ell^n \cdot k p^{n-1}$
  $p$-roots in the cube $[0, \ell p)^n$.
\end{proof}


\begin{lemma}
  \label{lem:uav-randomized}
  Let $A\in\MnZ$, $\epsilon>0$, $p$ a prime greater than $(n^2+3n)/\epsilon$,
  and $N$ a non-zero integer divisible by $p$. The number of pairs of
  matrices $(U, V)$ with entries from
    $[0, N)$ such that $U$ and $V$ are both non-singular modulo $p$, and that
    $UAV$ is $p$-characterized, and hence $p$-correspondent, is at least $(1 -
    \epsilon) N^{2n^2}$.
\end{lemma}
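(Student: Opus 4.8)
The plan is to reduce $p$-characterization of the specialized matrix $UAV$ to the non-vanishing modulo $p$ of a single explicit polynomial in the $2n^2$ entries of $U$ and $V$, and then to apply the Schwartz--Zippel-type bound of Lemma~\ref{prop:schwartz-zippel}. First, by Lemma~\ref{lem:content-coefficients}, for each $k\in[1,r]$ I would factor $g_k = \Delta_k\, h_k$, where $\Delta_k=\Delta_k(A)$ is the $k$th determinantal divisor of $A$ and $h_k\in\ZZ[u_{ij},v_{ij}]$ is a non-zero polynomial of total degree $2k$ with $\cont(h_k)=1$.

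The key step is the following observation. If $U,V\in\MnZ$ are both non-singular modulo $p$, then $\det U,\det V\in\ZZ_p^{\times}$, so $U,V\in\GL_n(\ZZ_p)$ and $UAV$ is $\ZZ_p$-equivalent to $A$; hence $\rank(UAV)=r$ and $\ord_p(\Delta_k(UAV))=\ord_p(\Delta_k(A))$ for every $k$. Since the coefficient of $x^{n-k}$ in the characteristic polynomial of $UAV$ is $f_k=g_k(U,V)=\Delta_k\, h_k(U,V)$, we get $\ord_p(f_k)=\ord_p(\Delta_k)+\ord_p(h_k(U,V))$ with $\ord_p(h_k(U,V))\ge 0$. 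Thus, for $U,V$ non-singular modulo $p$, the matrix $UAV$ is $p$-characterized if and only if $h_k(U,V)\not\equiv 0\pmod p$ for all $k\in[1,r]$; by Theorem~\ref{thm:pdet->pcosp} it is then also $p$-correspondent.

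Now set $G=\det(\U)\cdot\det(\V)\cdot\prod_{k=1}^{r}h_k\in\ZZ[u_{ij},v_{ij}]$. By the previous paragraph, a pair $(U,V)$ with entries in $[0,N)$ has the property asserted in the lemma exactly when $G(U,V)\not\equiv 0\pmod p$, so it suffices to bound the number of integer points of $[0,N)^{2n^2}$ at which $G$ vanishes modulo $p$. The polynomial $G$ is non-zero modulo $p$: $\det(\U)$ and $\det(\V)$ contain $u_{11}\cdots u_{nn}$ and $v_{11}\cdots v_{nn}$ with coefficient $1$, each $h_k$ is non-zero modulo $p$ because $\cont(h_k)=1$, and $\FFp[u_{ij},v_{ij}]$ is an integral domain. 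Its total degree is $\deg G=n+n+\sum_{k=1}^{r}2k=2n+r(r+1)\le 2n+n(n+1)=n^2+3n$.

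Finally, write $N=\ell p$ with $\ell\ge 1$ (possible since $p\mid N$). Applying Lemma~\ref{prop:schwartz-zippel} to $G$ in $2n^2$ variables, the number of $(U,V)\in[0,N)^{2n^2}=[0,\ell p)^{2n^2}$ with $G(U,V)\equiv 0\pmod p$ is at most $\ell^{2n^2}(n^2+3n)\,p^{2n^2-1}=(n^2+3n)\,N^{2n^2}/p$, which is less than $\epsilon N^{2n^2}$ since $p>(n^2+3n)/\epsilon$. Hence at least $(1-\epsilon)N^{2n^2}$ of the $N^{2n^2}$ pairs $(U,V)$ are as required. The main obstacle is the second paragraph: one must correctly invoke invariance of the local Smith form and of the rank under $\GL_n(\ZZ_p)$-equivalence, so that the determinantal divisors supplied by Lemma~\ref{lem:content-coefficients} --- which are those of $A$ --- are exactly the ones controlling $p$-characterization of $UAV$; the remaining bookkeeping (degree count, the integral-domain argument, and Schwartz--Zippel) is routine.
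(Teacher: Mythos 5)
Your proof is correct and follows essentially the same route as the paper's: both extract the content-one polynomials $h_k=\gbar_k$ from Lemma~\ref{lem:content-coefficients} and bound the bad evaluation points in $[0,N)^{2n^2}$ via Lemma~\ref{prop:schwartz-zippel}. The only cosmetic differences are that you fold $\det\U$, $\det\V$, and the $h_k$ into one polynomial $G$ and apply the counting lemma once, where the paper applies it to each factor and takes a union bound (yielding the same degree total $2n+r(r+1)\le n^2+3n$), and that you make explicit the step $\ord_p(\Delta_k(UAV))=\ord_p(\Delta_k(A))$ via $\GL_n(\ZZ_p)$-equivalence, which the paper leaves implicit.
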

\begin{proof}
  We show this count by associating each pair of matrices $(U, V)$ with
  a point in
  $[0, N)^{2n^2}$ and then bounding the number of roots of a particular set of
  polynomials
  when evaluated in the cube $[0, N)^{2n^2}$.

  First consider the product
  $\U A \V$ where $\U, \V$ have symbolic independent indeterminates $u_{ij}$
  and $v_{ij}$ for all $i, j \in [1, n]$.
  Let the characteristic polynomial of $\U A \V$ be
  \[
     g = x^n +  g_1 x^{n-1} + \ldots + g_k x^{n-k} + \ldots + g_n
  \]
  Then each
  \[
     \gbar_k = \frac{g_k}{\Delta_k(A)} \in \ZZ[u_{11}, u_{12} ,\ldots, v_{nn}]
  \]
  is a polynomial in the entries of $\U, \V$ with degree $2k$ and content $1$
  by Lemma~\ref{lem:content-coefficients}

  Each pair of matrices $U, V$ in the lemma statement defines
  a point in $[0, N)^{2n^2}$; the entries of $U, V$ define the
  values for the $2n^2$ variables $u_{ij}$ and $v_{ij}$.
  The coefficients of the characteristic polynomial of
  each matrix $U A V$ is obtained by evaluating the polynomials
  $g_k$ at the point in $[0, N)^{2n^2}$ defined by $(U, V)$.
  Then using Lemma~\ref{prop:schwartz-zippel}, we have
  $\gbar_k \equiv 0 \pmod{p}$ in at most
  $(N/p)^{2n^2} \cdot 2k p^{2n^2-1} = N^{2n^2} \cdot 2k / p$ points.

  The determinant of $\U$ (resp. $\V$) is a polynomial of degree $n$ in all of the
  $2n^2$ variables $u_{ij}$ (resp. $v_{ij}$), and hence $\det U \equiv
  0 \pmod{p}$ in at most
  $(N/p)^{2n^2} n p^{2n^2-1} = N^{2n^2} n / p$ points in the cube $[0, N)^{2n^2}$
  by Lemma~\ref{prop:schwartz-zippel}.

  Thus the number of points in $[0, N)^{2n^2}$ for which
  $\det U \equiv 0 \pmod{p}$ or $\det V \equiv 0 \pmod{p}$,
  or that $\gbar_k \equiv 0 \pmod{p}$ for \emph{some}
  $k\in [1,r]$ is at most
  \[
  \frac{2n {N^{2n^2}}}{p} + \sum_{1\leq k\leq r} \frac{2k
    {N^{2n^2}}}{p} = \frac{2n {N^{2n^2}}}{p} +
  \frac{r(r+1) {N^{2n^2}}}{p} \leq \frac{(n^2+3n)}{p}
       {N^{2n^2}} < \epsilon {N^{2n^2}}.
  \]

  If all $\gbar_k \not \equiv 0\pmod{p}$ for $k \in [1, r]$, then
  $\ord_p(\gbar_k) = 0$ and
  $\ord_p(g_k) = \ord_p(\Delta_k)$ for $k \in [1, r]$, so $UAV$ is
  $p$-characterized, and hence $p$-correspondent. The number of
    pairs $(U,V)$ for which this holds is then at least $N^{2n^2} - \epsilon
    N^{2n^2} = (1 - \epsilon) N^{2n^2}$.
\end{proof}

\begin{example}
  Intuitively, Lemma~\ref{lem:uav-randomized} shows
  that most choices of the pairs $(U, V)$ will
  result in $UAV$ being $p$-correspondent.
  Consider the matrix:
  \[
  A =
  \begin{pmatrix}
    -48 & -83 & 91 & -497 \\
    -407 & -666 & 637 & -3948 \\
    83 & 125 & -91 & 728 \\
    -291 & -599 & 903 & -3717
  \end{pmatrix},
  \]
  $A$ is not p-correspondent since its invariant factors are
  $[ 1, 7, 7, 49 ]$
  and its $7$-adic eigenvalues are (using the Sage computer algebra system):
  \[
  \begin{aligned}
    6 \cdot 7 & + 7^2 + O(7^3), \\
    3 \cdot 7 & + 3 \cdot 7^2 + O(7^3), \\
    1 \cdot 7 & + 4 \cdot 7^2 + O(7^3), \\
    2 \cdot 7 & + 3 \cdot 7^2 + O(7^3).
  \end{aligned}
  \]
  Now consider a particular choice of $U,V \in \ZZ^{4\times 4}$:
  \[
  U =
  \begin{pmatrix}
    6 & 1 & 0 & 20 \\
    1 & 1 & 1 & 0 \\
    1 & 1 & 1 & 2 \\
    1 & 3 & 0 & 1
  \end{pmatrix}, \quad
  V =
  \begin{pmatrix}
    1 & 1 & 1 & 17 \\
    0 & 0 & 3 & 2 \\
    0 & 5 & 1 & 3 \\
    1 & 0 & 9 & 56
  \end{pmatrix}.
  \]
  and let
  \[
  \Atil = U A V = \begin{pmatrix}
    -87785 & 89700 & -758134 & -4630434 \\
    -4089 & 2813 & -35060 & -213813 \\
    -12105 & 11261 & -104336 & -636989 \\
    -17618 & 12965 & -151217 & -922413
  \end{pmatrix}.
  \]
  Using Sage we can verify
  that $\det U \nequiv 0 \pmod 7$, $\det V \nequiv 0\pmod 7$,
  that the invariant factors of $\Atil$ are
  $[ 1, 7, 7, 2^{10} \cdot 7^2 \cdot 17 ]$
  and that the  $7$-adic valuations of the eigenvalues of $\Atil$
  are $[0, 1, 1, 2]$.
  As expected from Lemma \ref{lem:uav-randomized}, $\Atil$ is
  $p$-correspondent. \qed
\end{example}


\subsection{Density at large primes}

To establish the density of $p$-correspondent matrices, we consider
the set $\SS_S^m$ of all matrices with a given a Smith form $S$ and
integer entries from $[0, p^m)$, and show that
most matrices in this
set are $p$-characterized.

%

We employ the notion of a Smith normal form over the ring $\ZZ/p^m\ZZ$.
We choose $\{0, \ldots, p^m - 1\}$ for representing the residue
  classes in this ring.  For any non-zero $n\times n$ matrix
$A$ over the principal ideal ring $\ZZ/p^m\ZZ$, there exist two unimodular
matrices $U, V \in \GL_n(\ZZ/p^m\ZZ)$ and a unique matrix $S_p = \diag(p^{e_1},
\ldots, p^{e_r}, 0, \ldots, 0)\in(\ZZ/p^m\ZZ)^\nxn$, for integers $0\leq
e_1\leq\cdots\leq e_r<m$, such that $A = U S_p V$.
A matrix $U\in\GL_n(\ZZ/p^m\ZZ)$ is unimodular
if its inverse is also in $\GL_n(\ZZ/p^m\ZZ)$, or equivalently that its
determinant is non-zero modulo $p$.  We call $S_p$ the Smith form of $A$ over
$\ZZ/p^m\ZZ$.  Its existence and uniqueness follows from
\cite{Kaplansky:1949}\footnote{
Kaplansky uses the term \emph{diagonal reduction}
to denote the Smith form diagonalization, and
\emph{elementary divisor ring}
to denote a ring over which every matrix admits a diagonal
reduction (see \S 2, pp. 465).
In the paragraph following Theorem~12.3
he concludes (on pp. 487) that every
commutative principal ideal ring is an elementary divisor
ring.
In Theorem~9.3 he shows the uniqueness
of the invariant factors (up to associates)
by establishing an equivalent uniqueness result for modules
rather than matrices. See his argument on pp. 478 for the
equivalence of this result between matrices and modules.
Now observe that $\ZZ/p^m\ZZ$ is a
principal ideal ring to get existence and uniqueness of Smith form
 over this ring. Every ideal in this ring is generated
by a power of $p$ and hence the non-zero invariant factors are powers
of $p$.}.
If
$\Ahat\in\ZZ^\nxn$ is such that $\Ahat\equiv A \pmod{p^m}$, and $\Ahat$ has
integer Smith form $\diag(s_1,\ldots,s_{\rhat},0,\ldots,0)\in\ZZ^\nxn$ then
$r \le \rhat$ and
  $e_i = v_p(s_i) $ for $1\leq i\leq r$.

The following lemma relates the construction $UAV$ in
  Lemma~\ref{lem:uav-randomized} to integer matrices with prescribed $p$-adic
  valuations on their invariant factors.

For any integer $a$ and any prime power $p^m$, we use $a \rem p^m$ to denote the
unique, non-negative, integer $r < p^m$ such that $a = q p^m + r$ for some
integer $q$.  We extend the ``$\rem p^m$'' operator to vectors and matrices
using element-wise application. Note that ``$\rem p^m$'' operator is distinct
from the ``$\textrm{mod}\ p^m$'' equivalence relation; for example, $(a + b)
\rem p^m \neq (a \rem p^m) + (b \rem p^m)$ in general.

\begin{lemma}
  \label{lem:uniform-sampling-1}
  Fix an integer $n$, a prime $p$, and integers
  $0 \leq e_1 \leq \cdots \leq e_n < \infty$, and let
  $m > e_1 + \ldots + e_n$. Let $S = \diag(p^{e_1}, \ldots, p^{e_n})$
  and $\SS_S^m \subseteq \ZZ^\nxn$ as in
  Definition~\ref{def:SS_S^m}. Fix any $A \in \SS_S^m$. Let
  $L, R \in \ZZ^\nxn$ be any integer matrices satisfying
  $A = (LSR) \rem p^m$.  Then
  $v_p(\det L)=v_p(\det R)=0$, and hence
  $L,R$ are both invertible modulo $p^m$.
\end{lemma}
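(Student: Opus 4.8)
The plan is to exploit the hypothesis $A = (LSR)\rem p^m$ together with the bound $m > e_1 + \cdots + e_n$ on the range of the entries, and to track valuations of determinants. First I would unwind the ``$\rem p^m$'' condition: there is an integer matrix $Q$ with $A + p^m Q = LSR$. Taking determinants of both sides gives $\det(A + p^m Q) = \det(L)\cdot\det(S)\cdot\det(R)$, where $\det S = p^{e_1 + \cdots + e_n}$. The key observation is that, since $A \in \SS_S^m$, we have $v_p(\det A) = e_1 + \cdots + e_n =: E$, and by hypothesis $E < m$. Hence $v_p(\det(A + p^m Q)) = v_p(\det A) = E$, because adding the multiple $p^m Q$ inside the determinant changes $\det A$ only by a multiple of $p^m$, and $E < m$ means the $p$-adic valuation is unaffected (here one uses that $\det(A + p^m Q) \equiv \det A \pmod{p^m}$, since the determinant is a polynomial in the entries, so $\det(A+p^mQ) = \det A + p^m(\cdots)$).

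Next I would take $v_p$ of the factored equation: $E = v_p(\det(A+p^mQ)) = v_p(\det L) + v_p(\det S) + v_p(\det R) = v_p(\det L) + E + v_p(\det R)$. Since $\det L$ and $\det R$ are nonzero integers (the equation forces $\det L \cdot \det R \neq 0$, as the left side has finite valuation $E$), their valuations are non-negative integers, and the only way $v_p(\det L) + v_p(\det R) = 0$ is $v_p(\det L) = v_p(\det R) = 0$. This is exactly the first claim. The second claim — that $L$ and $R$ are invertible modulo $p^m$ — is then immediate: a square integer matrix whose determinant is a unit modulo $p$ (equivalently has $p$-adic valuation $0$) is invertible modulo $p^m$, since its determinant is then a unit in $\ZZ/p^m\ZZ$ and the adjugate provides an explicit inverse over that ring.

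The main thing to be careful about is the step $v_p(\det(A + p^m Q)) = v_p(\det A)$; this is where the hypothesis $m > e_1 + \cdots + e_n$ is really used, and it must be justified via the fact that $\det$ is a polynomial map so that $\det(A + p^mQ)$ and $\det A$ are congruent modulo $p^m$, combined with $v_p(\det A) = E < m$. Everything else is bookkeeping with valuations. I expect no serious obstacle; the only subtlety is making sure $\det L, \det R \ne 0$ before taking valuations, which follows because the product $\det L \cdot \det S \cdot \det R$ equals something of finite valuation $E$ and hence is nonzero.
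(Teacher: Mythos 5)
Your proof is correct and follows essentially the same route as the paper: unwind the $\rem$ into $A + p^m Q = LSR$, take determinants, reduce modulo $p^m$ using the fact that $\det$ is a polynomial in the entries, and conclude from $v_p(\det A) = v_p(\det S) < m$ that $v_p(\det L) + v_p(\det R) = 0$. The only cosmetic difference is that you substitute $v_p(\det S) = E = e_1+\cdots+e_n$ explicitly rather than carrying $v_p(\det S)$ symbolically as the paper does.
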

\begin{proof}
    If $A = (LSR) \rem p^m$ then there exists an integer
    matrix $Q$ (whose entries are the element-wise quotients
    of the Euclidean division)
    such that $A + p^m Q = LSR$.
    { Taking the determinants of both sides, we have}
    \[
        \det(A + p^m Q) = \det(L) \det(S) \det(R).
    \]
    Both sides are (products of) determinants, and hence polynomials in the matrix
    entries, and projecting modulo $p^m$ we get
    \[
        \det(A) \equiv \det(L) \det(S)\det(R) \pmod{p^m},
    \]
    or equivalently
    \[
    \det(A) + p^mq = \det(L)\det(S)\det(R),
    \]
    for some $q\in\ZZ$.

    Since $A\in\SS_S^m$ we know that $v_p(\det(A))=v_p(\det(S))$, and
   moreover, $0\leq v_p(\det(A))<m$ by the conditions of the lemma.
    { Thus $v_p(\det(A)+p^mq)= 
    v_p(\det(A))<m$, since the valuation, the number of times $p$ divides
    $\det(A)+p^mq$, is unaffected by the second summand.}
    Taking the valuation of both sides, we then have
    \[
        v_p(\det(A)+p^mq) = v_p(\det(A)) =
        v_p(\det(L)) + v_p(\det(S)) + v_p(\det(R)).
    \]
   Since $0\leq v_p(\det A)=v_p(\det S)<m$, it must be the case that $v_p(\det(L))=v_p(\det(R))=0$.
\end{proof}

\begin{lemma}
  \label{lem:uniform-sampling}
  Fix an integer $n$, a prime $p$, and integers $0\leq e_1\leq \cdots\leq e_n$,
  and let $m > e_1 + \cdots + e_n$.  Let $S=\diag(p^{e_1},\ldots,p^{e_n})$ and
  $\SS_S^m\subseteq\ZZ^\nxn$ as in Definition~\ref{def:SS_S^m}.  Fix any $A \in
  \SS_S^m$. Define
  \[
  P_A =
  \left\{
    (L, R) : L, R \text{ have entries from } [0,
    p^m) \text{ and } A = (LSR) \rem p^m \
  \right\}.
  \]
  Then $|P_A| = |\GL_n(\ZZ/p^m\ZZ)^2| / |\SS_S^m|$,
  independent of the choice of $A$.
\end{lemma}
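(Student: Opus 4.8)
The plan is to reduce this counting problem over $\ZZ$ to one over the finite ring $\ZZ/p^m\ZZ$, and then to invoke the orbit--stabilizer theorem for the action of $\GL_n(\ZZ/p^m\ZZ)^2$. For an integer matrix $M$ with entries in $[0,p^m)$ write $M_{p^m}$ for its image in $(\ZZ/p^m\ZZ)^\nxn$; since $\{0,\ldots,p^m-1\}$ are our chosen residues, $M\mapsto M_{p^m}$ is a bijection between $[0,p^m)^\nxn$ and $(\ZZ/p^m\ZZ)^\nxn$. First I would establish that $(L,R)\mapsto(L_{p^m},R_{p^m})$ is a bijection from $P_A$ onto
\[
P'_A=\left\{(\bar L,\bar R)\in\GL_n(\ZZ/p^m\ZZ)^2 : A_{p^m}\equiv \bar L\, S_{p^m}\,\bar R\pmod{p^m}\right\}.
\]
In one direction, $A=(LSR)\rem p^m$ means $A+p^mQ=LSR$ for an integer matrix $Q$, so reducing modulo $p^m$ gives $A_{p^m}\equiv L_{p^m}S_{p^m}R_{p^m}$, and Lemma~\ref{lem:uniform-sampling-1} shows $L_{p^m},R_{p^m}$ are units. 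Conversely, any $(\bar L,\bar R)\in P'_A$ lifts through our chosen residues to $L,R\in[0,p^m)^\nxn$, and the congruence becomes $A+p^mQ_1=(L+p^mQ_2)(S+p^mQ_3)(R+p^mQ_4)$ for integer matrices $Q_i$, which rearranges to $A+p^mQ_5=LSR$, i.e.\ $A=(LSR)\rem p^m$. Hence $|P_A|=|P'_A|$.

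Next I would set up the action of $G=\GL_n(\ZZ/p^m\ZZ)^2$ on $(\ZZ/p^m\ZZ)^\nxn$ by $(\bar L,\bar R)\cdot\bar M=\bar L\,\bar M\,\bar R$, and identify the orbit of $S_{p^m}$ with the set of reductions of matrices in $\SS_S^m$: every $M\in\SS_S^m$ has an integer Smith decomposition which reduces to $\bar L\,S_{p^m}\,\bar R$ (using $m>e_1+\cdots+e_n$ so that the exponents $e_i$ are still visible modulo $p^m$), and conversely every $\bar L\,S_{p^m}\,\bar R$ is the reduction of some integer matrix in $\SS_S^m$. Thus $|\orbit(S_{p^m})|=|\SS_S^m|$. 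I would then show $|P'_A|=|\stab(S_{p^m})|$ by fixing a Smith decomposition $A_{p^m}\equiv U_{p^m}S_{p^m}V_{p^m}\pmod{p^m}$ (with $U_{p^m},V_{p^m}$ units, again by Lemma~\ref{lem:uniform-sampling-1}) and checking that $(\bar L,\bar R)\mapsto(U_{p^m}^{-1}\bar L,\bar R\,V_{p^m}^{-1})$ and its inverse give mutually inverse maps between $P'_A$ and $\stab(S_{p^m})=\{(\bar L,\bar R):\bar L\,S_{p^m}\,\bar R\equiv S_{p^m}\pmod{p^m}\}$. Finally, the orbit--stabilizer theorem yields $|\orbit(S_{p^m})|\cdot|\stab(S_{p^m})|=|G|$, so $|P_A|=|P'_A|=|\stab(S_{p^m})|=|G|/|\orbit(S_{p^m})|=|\GL_n(\ZZ/p^m\ZZ)^2|/|\SS_S^m|$, which is manifestly independent of $A$.

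The step I expect to be the main obstacle is the bijection $P_A\leftrightarrow P'_A$: because the ``$\rem p^m$'' operation does not commute with addition or multiplication, one cannot manipulate remainders directly and must instead phrase everything through genuine integer identities of the form $A+p^mQ=LSR$ before projecting. A secondary point requiring care is confirming that $\orbit(S_{p^m})$ and $\stab(S_{p^m})$ are genuinely defined in terms of $S$ alone (not of chosen lifts), and that the unit hypotheses needed to form $U_{p^m}^{-1}$, $V_{p^m}^{-1}$ are exactly what Lemma~\ref{lem:uniform-sampling-1} supplies under the assumption $m>e_1+\cdots+e_n$.
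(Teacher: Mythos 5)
Your proposal is correct and follows essentially the same route as the paper's own proof: bijection $P_A\leftrightarrow P'_A$ via lifting/reduction and Lemma~\ref{lem:uniform-sampling-1}, identification of $\orbit(S_{p^m})$ with $\SS_S^m$ and $P'_A$ with $\stab(S_{p^m})$ via a fixed Smith decomposition, then orbit--stabilizer.
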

\begin{proof}
    We have chosen $[0, p^m)$ to represent
    $\ZZ/p^m\ZZ$, so any integer matrix from $[0,p^m)^\nxn$ has a unique
    image over $\ZZ/p^m\ZZ$ and vice versa.
    {To keep track of the rings we are working on,
    we use the subscript ${p^m}$ to
    denote matrices over the ring $\ZZ/p^m\ZZ$.}
    We first show that there is a bijection
    between $P_A$ and
    \[
       P'_A = \{ (L_{p^m}, R_{p^m}) \in\GL_n(\ZZ/p^m\ZZ)^2:
         A_{p^m} \equiv L_{p^m} S_{p^m} R_{p^m} \pmod{p^m} \}.
    \]
    If $(L, R) \in P_A$, and its image over $\ZZ/p^m\ZZ$ is
    $(L_{p^m}, R_{p^m})$, then
    $(L_{p^m}, R_{p^m}) \in\GL_n(\ZZ/p^m\ZZ)^2$
    by Lemma~\ref{lem:uniform-sampling-1}.
    {
    Also, $A = (LSR) \rem p^m$ implies that $A + p^m Q = LSR$
    for some integer matrix $Q$
    and so $A_{p^m} \equiv L_{p^m} S_{p^m} R_{p^m} \pmod{p^m}$.
    Thus $(L_{p^m}, R_{p^m}) \in P_A'$.}

    Conversely, let $(L_{p^m}, R_{p^m}) \in P_A'$ and their preimages
    be $L, R \in [0, p^m)^\nxn$. The equivalence
    $A_{p^m} \equiv L_{p^m} S_{p^m} R_{p^m} \pmod{p^m}$ implies
    \[
    A + p^m Q_1 = (L + p^m Q_2) (S + p^m Q_3) (R + p^m Q_4),
    \]
    for some integer matrices $Q_1, Q_2, Q_3, Q_4$. This can be simplified to
    \[
    A + p^m Q_5 = L S R,
    \]
    for some integer matrix $Q_5$. In other words,
    \[
    A = (LSR) \rem p^m,
    \]
    and so $(L, R) \in P_A$. Thus there is a bijection between $P_A$ and $P_A'$.

    We now observe that the multiplicative group $\GL_n(\ZZ/p^m\ZZ)^2$
    acts on $(\ZZ/p^m\ZZ)^\nxn$ via left and right multiplication:
    $(L_{p^m}, R_{p^m}) \in \GL_n(\ZZ/p^m\ZZ)^2$ acts on
    $A_{p^m} \in (\ZZ/p^m\ZZ)^\nxn$ to produce
    $L_{p^m} A_{p^m} R_{p^m} \in (\ZZ/p^m\ZZ)^\nxn$.  Then
    $\orbit(A_{p^m}) = \orbit(S_{p^m})$ under this group action since
    there exists at least one such $L_{p^m}, R_{p^m}$ with
    $L_{p^m} A_{p^m} R_{p^m} \equiv S_{p^m} \pmod{p^m}$.  Furthermore,
    the orbit of $S_{p^m}$ corresponds to $\SS_S^m$: every matrix in
    $\SS_S^m$ has a natural image over $\ZZ/p^m\ZZ$ which can be
    written as $L_{p^m} S_{p^m} R_{p^m} \pmod{p^m}$ for suitable
    choice of $L_{p^m}, R_{p^m} \in \GL_n(\ZZ/p^m\ZZ)$, and conversely
    every matrix $L_{p^m} S_{p^m} R_{p^m} \pmod{p^m}$ corresponds to a
    preimage integer matrix in $\SS_S^m$. Therefore we know
    $|\orbit(S_{p^m})| = |\SS_S^m|$.

   Let $\stab(S_{p^m})$ be the stabilizer of $S_{p^m}$ defined as:
   \[
    \left\{ (L_{p^m}, R_{p^m}) : L_{p^m}, R_{p^m} \in (\ZZ/p^m\ZZ)^\nxn,
     S_{p^m} \equiv L_{p^m} S_{p^m} R_{p^m} \pmod{p^m} \right\},
   \]
   and let $A_{p^m} \equiv U_{p^m} S_{p^m} V_{p^m} \pmod{p^m}$
   be a Smith decomposition of $A_{p^m}$, then every
   pair $(L_{p^m}, R_{p^m}) \in P'_A$ can be mapped to a pair
   $(U_{p^m}^{-1} L_{p^m}, R_{p^m} V_{p^m}^{-1}) \in \stab(S_{p^m})$.
   Similarly, every pair $(L_{p^m}, R_{p^m}) \in \stab(S_{p^m})$
   can be mapped to a pair $(U_{p^m} L_{p^m}, R_{p^m} V_{p^m}) \in P'_A$.
   Thus $|P'_A| = |\stab(S_{p^m})|$.

   By the orbit-stabilizer theorem \citep[Proposition 7.2]{Artin:1991}, we
   have
   \[
   |\orbit(S_{p^m})| \cdot |\stab(S_{p^m})| = |\GL_n(\ZZ/p^m\ZZ)^2|.
   \]
   The lemma statement follows because
   $|\orbit(S_{p^m})| = |\SS_S^m|$, and $|\stab(S_{p^m})| =
   |P'_A| = |P_A|$.
\end{proof}

\begin{lemma}
  \label{prop:val-phi-rem}
  Let $\phi \in \ZZ[x_1, \ldots, x_\ell]$ be a non-zero polynomial and
  $a_1, \ldots, a_\ell$ $\in \ZZ$. Let $p$ be a prime and $m \ge 1$
  be an integer.
  Let $k = v_p(\phi(a_1, \ldots,
  a_\ell))$ and $\kbar = v_p(\phi(a_1 \rem p^m, \ldots, a_\ell \rem
  p^m))$. Then
  \begin{itemize}[topsep=3pt,itemsep=0pt]
    \item[(i)] If $k < m$ then $\kbar = k$.
    \item[(ii)] If $k \ge m$ then $\kbar \ge m$.
    \item[(iii)] If $k = \infty$ then $\kbar \ge m$.
  \end{itemize}
\end{lemma}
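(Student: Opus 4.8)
The plan is to reduce the statement to the ultrametric property of $v_p$ --- namely that $v_p(x+y) \ge \min\{v_p(x), v_p(y)\}$, with equality whenever $v_p(x) \ne v_p(y)$ --- combined with the elementary fact that an integer polynomial respects congruences modulo $p^m$.

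First I would abbreviate $u = \phi(a_1, \ldots, a_\ell)$ and $w = \phi(a_1 \rem p^m, \ldots, a_\ell \rem p^m)$, so that $k = v_p(u)$ and $\kbar = v_p(w)$. Since $a_i \equiv a_i \rem p^m \pmod{p^m}$ for each $i$ and $\phi$ has coefficients in $\ZZ$, substituting into $\phi$ gives $u \equiv w \pmod{p^m}$, i.e.\ $w - u = p^m t$ for some $t \in \ZZ$; in particular $v_p(w - u) \ge m$.

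Then I would treat the three cases. For (i), if $k < m$ then $v_p(u) = k < m \le v_p(w-u)$, so $w = u + (w-u)$ is a sum of two terms of distinct valuation and hence $\kbar = v_p(w) = \min\{v_p(u), v_p(w-u)\} = k$. For (ii), if $k \ge m$ then both $v_p(u) \ge m$ and $v_p(w-u) \ge m$, so $\kbar = v_p(w) \ge \min\{v_p(u), v_p(w-u)\} \ge m$. For (iii), if $k = \infty$ then $u = 0$, hence $w = w - u = p^m t$ and $\kbar = v_p(w) \ge m$; this is the same computation as (ii) under the convention $\infty \ge m$, and if $w = 0$ as well then $\kbar = \infty$, which still satisfies $\kbar \ge m$.

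I do not anticipate a genuine obstacle here. The only point that needs a little care is that case (i) invokes the \emph{equality} part of the ultrametric inequality, which is legitimate precisely because the strict inequality $k < m$ forces the two summands $u$ and $w - u$ to have different valuations; and one should remember in cases (ii) and (iii) that $w$ may be $0$, in which case $\kbar = \infty$, consistent with the asserted bound $\kbar \ge m$.
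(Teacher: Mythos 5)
Your proof is correct and follows essentially the same route as the paper: both rest on the observation that $\phi$ respects congruences modulo $p^m$, so that the two evaluations differ by a multiple of $p^m$, and then read off the valuation from that decomposition. Your phrasing via the ultrametric inequality is a touch cleaner and makes explicit the possibility $w=0$ in cases (ii) and (iii), which the paper leaves implicit, but the underlying argument is identical.
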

\begin{proof}
  Let $\phi(a_1, \ldots, a_\ell) = p^k \alpha$ for some $\alpha \in \ZZ$ and $p
  \nmid \alpha$. For all $i \in [1,
      \ell]$, apply the Euclidean division to $a_i$ and $p^m$ to get $a_i = r_i
    + p^m q_i$ where $p^m \nmid q_i$ and $r_i = a_i \rem p^m$. Then
  \[ \phi(r_1 + p^m q_1 , \ldots, r_\ell + p^m q_\ell) \equiv \phi(r_1, \ldots,
  r_\ell) \pmod{p^m}. \]

  \noindent (i) If $k < m$ then
  \[ \phi(r_1 + p^m q_1, \ldots, r_\ell + p^m
    q_\ell) \equiv  \phi(r_1, \ldots, r_\ell) \equiv p^k \alpha \pmod{p^m} \]
  and $\phi(r_1, \ldots, r_\ell) = p^k \alpha + p^m u$ for some $u \in \ZZ$.
  Now $v_p(p^k \alpha + p^m u) = k$ since $p^m u$ has valuation at least
  $m > k$. So $\kbar = k$.

  \noindent (ii) If $k \ge m$ then $\phi(r_1 + p^m q_1 , \ldots, r_\ell + p^m q_\ell)
  \equiv \phi(r_1, \ldots, r_\ell) \equiv 0 \pmod{p^m} $, and $\phi(r_1, \ldots,
  r_\ell) = p^{m + j} u_1$ for some $u_1 \in \ZZ$, $p \nmid u_1$ and some $j \ge
  0$. Then $\kbar = m + j \ge m$.

  \noindent (iii) If $k = \infty$ then $\phi(r_1 + p^m q_1 , \ldots, r_\ell + p^m q_\ell)
  = 0$, and $\phi(r_1, \ldots, r_\ell) \equiv \phi(r_1 + p^m q_1 , \ldots,
  r_\ell + p^m q_\ell) \equiv 0 \pmod{p^m}$, which is similar to part (ii).
\end{proof}

\begin{lemma}
  \label{prop:val-gcd-phi}
  Let $\phi_1, \ldots, \phi_r \in \ZZ[x_1, \ldots, x_{\ell}]$ be
  polynomials such that
  \[
  v_p\biggl(\gcd \bigl\{ \phi_1(a_1, \ldots, a_\ell), \ldots, \phi_r(a_1, \ldots,
  a_\ell) \bigr\}\biggr) = k < m.
  \] Then
  \begin{align*}
  v_p \biggl(\gcd \bigl\{ \phi_1( & a_1 \rem p^m, \ldots, a_\ell \rem p^m), \\[-10pt]
  & \ldots, \phi_r(a_1 \rem p^m, \ldots, a_\ell \rem p^m ) \bigr\}\biggr) = k.
  \end{align*}
\end{lemma}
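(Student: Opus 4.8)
The plan is to reduce the statement to the elementary fact that the $p$-adic valuation of a GCD of integers is the minimum of their individual valuations, and then apply Lemma~\ref{prop:val-phi-rem} termwise. First I would dispose of any $\phi_i$ that is identically zero: such a term contributes $\infty$ to the valuation of the GCD on both sides and can simply be discarded, so we may assume each $\phi_i$ is a non-zero polynomial and hence Lemma~\ref{prop:val-phi-rem} applies to it. Then I would set $k_i = v_p(\phi_i(a_1,\ldots,a_\ell))$ for $i\in[1,r]$, so that the hypothesis reads $\min_{1\le i\le r} k_i = k < m$ (using the convention $k_i=\infty$ when $\phi_i(a_1,\ldots,a_\ell)=0$). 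In particular there is an index $i^\ast$ with $k_{i^\ast}=k$, and $k_i\ge k$ for every $i$. Write $\bar k_i = v_p(\phi_i(a_1\rem p^m,\ldots,a_\ell\rem p^m))$; the goal is exactly to show $\min_{1\le i\le r}\bar k_i = k$.

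For the distinguished index $i^\ast$ we have $k_{i^\ast}=k<m$, so part~(i) of Lemma~\ref{prop:val-phi-rem} gives $\bar k_{i^\ast}=k$ precisely. For every other index $i$ I would split into the three cases of Lemma~\ref{prop:val-phi-rem}: if $k_i<m$ then $\bar k_i=k_i\ge k$ by part~(i); if $m\le k_i<\infty$ then $\bar k_i\ge m>k$ by part~(ii); and if $k_i=\infty$ then $\bar k_i\ge m>k$ by part~(iii). In all three cases $\bar k_i\ge k$. Combining this with $\bar k_{i^\ast}=k$ yields $\min_{1\le i\le r}\bar k_i=k$. Finally, invoking $v_p(\gcd\{c_1,\ldots,c_r\})=\min_{1\le i\le r}v_p(c_i)$ for any integers $c_1,\ldots,c_r$ (not all zero), both for the original values and for the ``$\rem p^m$'' values, turns the two minima into the two GCD valuations, giving the claimed equality.

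There is essentially no genuine obstacle here beyond bookkeeping; all the arithmetic content is carried by Lemma~\ref{prop:val-phi-rem}. The only point requiring a little care is to conclude $\bar k_i\ge k$ (rather than $\bar k_i=k$) for the indices in the ``$k_i\ge m$'' and ``$k_i=\infty$'' cases, and to observe that this is enough precisely because we only need the minimum over $i$ to be attained — and it is attained at $i^\ast$, where the hypothesis $k<m$ is what lets us pin the value down exactly via part~(i).
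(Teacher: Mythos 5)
Your proposal is correct and follows essentially the same route as the paper's proof: identify the index attaining the minimum valuation $k<m$, apply Lemma~\ref{prop:val-phi-rem}(i) to pin its value exactly, and use parts (i)--(iii) to show all other indices stay at valuation $\ge k$, then convert the minimum back to the gcd valuation. Your explicit handling of identically-zero $\phi_i$ (so that Lemma~\ref{prop:val-phi-rem} applies) is a small clarification the paper leaves implicit, but the argument is the same.
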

\begin{proof}
  There exists an $i \in [1, r]$ such that $v_p(\phi_i(a_1, \ldots,
  a_\ell)) = k$ whereas for all other $j \in [1, r] \setminus \{ i
  \}$, we have $v_p(\phi_j(a_1, \ldots, a_\ell)) \ge k$ (and possibly
  $\infty$). Then, by Lemma~\ref{prop:val-phi-rem}, $v_p(\phi_i(a_1
  \rem p^m, \ldots, a_\ell \rem p^m)) = k$ while for all $j$,
  $v_p(\phi_j(a_1 \rem p^m, \ldots, a_\ell \rem p^m))$ is either $k$
  or $m$ or higher than $m$ (but not lower than $k$). Thus the
  valuation of the desired gcd is also $k$.
\end{proof}

We now show that if $A$ is non-singular, then the powers of $p$ in the
Smith form of $A$ and $A \rem p^m$ coincide when $m > v_p(\det A)$.

\begin{lemma}\label{lem:snf-rem-large}
  Let $A \in \ZZ^{\nxn}$ be a non-singular matrix, $m > v_p(\det A)$
  and $\Abar=A\rem p^m$.  Suppose the invariant factors of $A$ and
  $\Abar$ are $s_1,\ldots,s_n$ and $\sbar_1,\ldots,\sbar_n$, respectively.
  Then
  $v_p(s_i)=v_p(\sbar_i)$ for $1\leq i\leq n$.
  \end{lemma}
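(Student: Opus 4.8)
The plan is to pass from the invariant factors to the determinantal divisors and then invoke Lemma~\ref{prop:val-gcd-phi}. Since $v_p(s_i) = v_p(\Delta_i) - v_p(\Delta_{i-1})$ (with the convention $\Delta_0 = 1$), and likewise $v_p(\sbar_i) = v_p(\Delta_i(\Abar)) - v_p(\Delta_{i-1}(\Abar))$, it suffices to prove that $v_p(\Delta_i(A)) = v_p(\Delta_i(\Abar))$ for every $i \in [1,n]$.

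First I would record that the hypothesis $m > v_p(\det A)$ forces $v_p(\Delta_i(A)) < m$ for all $i$: because $\Delta_{i-1}$ divides $\Delta_i$, the sequence $v_p(\Delta_1(A)) \le v_p(\Delta_2(A)) \le \cdots \le v_p(\Delta_n(A))$ is non-decreasing, and $\Delta_n(A) = \det A$, so $v_p(\Delta_i(A)) \le v_p(\det A) < m$; in particular each $\Delta_i(A)$ is non-zero. Next, fix $i$ and view each $i \times i$ minor $A\binom{\sigma}{\tau}$, for $\sigma, \tau \in \calC_i^n$, as the value at the entry-tuple $(a_{11}, \ldots, a_{nn})$ of a fixed polynomial $\phi_{\sigma,\tau} \in \ZZ[x_{11}, \ldots, x_{nn}]$ given by the corresponding Leibniz expansion; these $\binom{n}{i}^2$ polynomials play the role of $\phi_1, \ldots, \phi_r$ in Lemma~\ref{prop:val-gcd-phi}. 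Evaluating the same polynomials at $a_{jk} \rem p^m$ produces exactly the $i \times i$ minors of $\Abar = A \rem p^m$, so $\Delta_i(A)$ and $\Delta_i(\Abar)$ are the gcd's of $\{\phi_{\sigma,\tau}(a_{11}, \ldots, a_{nn})\}$ and $\{\phi_{\sigma,\tau}(a_{11}\rem p^m, \ldots, a_{nn}\rem p^m)\}$ respectively. Applying Lemma~\ref{prop:val-gcd-phi} with $k = v_p(\Delta_i(A)) < m$ then gives $v_p(\Delta_i(\Abar)) = v_p(\Delta_i(A))$.

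Combining these equalities over $i$ (using $\Delta_0 = \Delta_0(\Abar) = 1$) yields $v_p(\sbar_i) = v_p(s_i)$ for all $i$, as required; as a byproduct one gets $v_p(\det\Abar) = v_p(\det A) < m$, so $\Abar$ is non-singular as well. I do not anticipate a genuine obstacle here, since the essential content is already isolated in Lemma~\ref{prop:val-gcd-phi}; the only step needing a moment's care is the monotonicity $v_p(\Delta_i(A)) \le v_p(\det A)$ that supplies the inequality $k < m$ required to apply that lemma.
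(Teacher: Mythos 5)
Your argument is essentially identical to the paper's: reduce to showing $v_p(\Delta_i(A)) = v_p(\Delta_i(\Abar))$, view each $i \times i$ minor as a polynomial in the $n^2$ matrix entries, and invoke Lemma~\ref{prop:val-gcd-phi}. You in fact fill in one detail the paper leaves implicit, namely the verification that $v_p(\Delta_i(A)) \le v_p(\det A) < m$ (via the divisibility chain $\Delta_1 \mid \cdots \mid \Delta_n = \det A$), which is exactly the hypothesis $k < m$ required to apply that lemma.
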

\begin{proof}
  Let $\Delta_i$ and $\Deltabar_i$
  be the $i$th determinantal divisors of $A$ and $\Abar$ respectively.
  We show equivalently that $v_p(\Delta_i)=v_p(\Deltabar_i)$ for
  $1\leq i\leq n$.  Each $\Delta_i$ (resp. $\Deltabar_i$) is the gcd
  of all $i \times i$ minors of $A$ (resp. $\Abar$), where each such
  minor is a polynomial in the $n^2$ entries of $A$ (resp. $\Abar$).
  Then by Lemma~\ref{prop:val-gcd-phi} we have $v_p(\Delta_i) =
  v_p(\Deltabar_i)$ for all $i \in [1, n]$.
\end{proof}

\begin{lemma}
  \label{lem:val-fi}
  Let $A \in \ZZ^{n \times n}$, $\det A \neq 0$ and $m > v_p(\det A)$.  Let
  $f_i^{M}$ denote the $x^{n-i}$ coefficient of the characteristic polynomial of
  a matrix $M$. For all $i \in [1, n]$, if $v_p(f_i^{A}) = k < m$ then
  $v_p(f_i^{A \rem p^m}) = k$.
\end{lemma}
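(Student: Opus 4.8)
The plan is to express, for each fixed $i$, the characteristic-polynomial coefficient $f_i^M$ as the value of one fixed polynomial evaluated at the entries of $M$, and then quote Lemma~\ref{prop:val-phi-rem}(i) essentially verbatim.

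First I would fix $i \in [1,n]$. By the minor expansion~\eqref{eq:fi-sum-minors} we have $f_i^M = (-1)^i \sum_{\sigma \in \calC_i^n} M\binom{\sigma}{\sigma}$, and by Leibniz's determinant expansion each principal minor $M\binom{\sigma}{\sigma}$ is a polynomial with coefficients in $\{0,\pm 1\}$ in the $n^2$ entries of $M$. Hence there is a polynomial $\phi_i \in \ZZ[x_{11}, x_{12}, \ldots, x_{nn}]$, depending only on $n$ and $i$ and not on the particular matrix, such that $f_i^M = \phi_i(m_{11},\ldots,m_{nn})$ for every $M = (m_{jk}) \in \ZZ^\nxn$.

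Next, writing $A = (a_{jk})$, I would observe that $f_i^A = \phi_i(a_{11},\ldots,a_{nn})$ and, since the ``$\rem p^m$'' operator acts entrywise, $f_i^{A \rem p^m} = \phi_i(a_{11} \rem p^m, \ldots, a_{nn} \rem p^m)$. The hypothesis $v_p(f_i^A) = k < m$ is exactly the hypothesis of Lemma~\ref{prop:val-phi-rem}(i) applied with $\phi = \phi_i$, $\ell = n^2$, and $(a_1,\ldots,a_\ell) = (a_{11},\ldots,a_{nn})$; that lemma then gives $v_p(\phi_i(a_{11}\rem p^m,\ldots,a_{nn}\rem p^m)) = k$, i.e., $v_p(f_i^{A\rem p^m}) = k$, which is the claim. (Equivalently, one could invoke Lemma~\ref{prop:val-gcd-phi} with the single polynomial $\phi_1 = \phi_i$.)

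There is essentially no obstacle: all the arithmetic content already resides in Lemma~\ref{prop:val-phi-rem}. The only points deserving a word of care are that the same polynomial $\phi_i$ computes the coefficient $f_i$ for both $A$ and $A \rem p^m$ (which holds because $\phi_i$ is defined independently of the matrix substituted into it), and that the extra hypotheses $\det A \neq 0$ and $m > v_p(\det A)$ are not actually used in the argument above; they are carried along only because this lemma will be applied in a setting (cf.\ Lemma~\ref{lem:snf-rem-large} and the passage to $\SS_S^m$) where they hold and are needed to match the Smith-form side of the comparison.
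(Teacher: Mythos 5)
Your proof is correct and matches the paper's own argument: express $f_i^M$ as the sum of $i\times i$ principal minors, observe this is a fixed integer polynomial in the entries, and invoke Lemma~\ref{prop:val-phi-rem}(i). Your side remark that the hypotheses $\det A\neq 0$ and $m > v_p(\det A)$ are not actually used in the proof (only in the surrounding application) is also accurate.
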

\begin{proof}
  Each $f_i^{A}$ is the sum of all $i \times i$ \emph{principal} minors of $A$,
  which is a polynomial in the entries of $A$.  The claim then follows
  by Lemma~\ref{prop:val-phi-rem}.
\end{proof}

We now apply the above lemmas to get the following.

\begin{lemma}\label{lem:remp-is-p-char}
  Let $A$ be a $p$-characterized non-singular matrix and let $m > v_p(\det
  A)$. Then $\Abar = A \rem p^m$ is also $p$-characterized.
\end{lemma}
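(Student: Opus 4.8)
The plan is simply to assemble the three preceding lemmas. Since $A$ is non-singular it has rank $n$, so the hypothesis that $A$ is $p$-characterized says precisely that $v_p(f_i^A) = v_p(\Delta_i)$ for all $i \in [1,n]$, where $f_i^A$ denotes the $x^{n-i}$-coefficient of the characteristic polynomial of $A$ and $\Delta_i$ its $i$th determinantal divisor.

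First I would record the uniform bound that makes the later lemmas applicable: because the determinantal divisors satisfy $\Delta_1 \mid \Delta_2 \mid \cdots \mid \Delta_n$ and $\Delta_n = \pm\det A$, we have $v_p(\Delta_i) \le v_p(\det A) < m$ for every $i$, and hence also $v_p(f_i^A) = v_p(\Delta_i) < m$.

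Now invoke Lemma~\ref{lem:val-fi} with this matrix $A$: since $v_p(f_i^A) < m$, it gives $v_p(f_i^{\Abar}) = v_p(f_i^A)$ for each $i \in [1,n]$. Likewise, Lemma~\ref{lem:snf-rem-large} applies because $m > v_p(\det A)$, and in the determinantal-divisor form used in its proof it yields $v_p(\Deltabar_i) = v_p(\Delta_i)$ for each $i$, where $\Deltabar_i$ is the $i$th determinantal divisor of $\Abar$. Chaining these equalities, $v_p(f_i^{\Abar}) = v_p(f_i^A) = v_p(\Delta_i) = v_p(\Deltabar_i)$ for all $i \in [1,n]$.

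It remains only to check that the definition of $p$-characterization for $\Abar$ ranges over the same index set, i.e.\ that $\Abar$ also has rank $n$. This follows from Lemma~\ref{prop:val-phi-rem}(i) applied to $\phi = \det$ and the entries of $A$: since $v_p(\det A) < m$ we get $v_p(\det \Abar) = v_p(\det A) < \infty$, so $\det \Abar \neq 0$. Therefore $\Abar$ is $p$-characterized, as claimed. I do not anticipate any real obstacle; the only points requiring care are establishing the bound $v_p(\Delta_i) < m$ needed to trigger Lemma~\ref{lem:val-fi}, and observing that $\Abar$ remains non-singular so that $p$-characterization is being tested at the same rank $n$.
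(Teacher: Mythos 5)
Your proof is correct and follows the same route as the paper's, chaining Lemma~\ref{lem:snf-rem-large} and Lemma~\ref{lem:val-fi} to transfer the equalities $v_p(f_i^A)=v_p(\Delta_i)$ to $\Abar$. The only difference is that you spell out two hypotheses the paper leaves implicit — the bound $v_p(\Delta_i)\le v_p(\det A)<m$ needed to trigger Lemma~\ref{lem:val-fi}, and that $\Abar$ stays non-singular so $p$-characterization is tested over the same index range — which is sound bookkeeping but not a different argument.
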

\begin{proof}
  Let $\Delta_i$ and $\Deltabar_i$ be the $i$th determinantal divisors
  of $A$ and $\Abar$ respectively, for $1\leq i\leq n$.  If $A$ is a
  $p$-characterized, then $v_p(f_i^{A}) = v_p(\Delta_i)$ for each $i
  \in [1, n]$. By Lemma~\ref{lem:snf-rem-large} and
  Lemma~\ref{lem:val-fi}, we have $v_p(\Deltabar_i) = v_p(\Delta_i)$
  and $v_p(f_i^{\Abar}) = v_p(f_i^{A})$. So $\Abar$ is
  $p$-characterized.
\end{proof}


\begin{example}
  For a prime $p$ consider the matrix $A$ with its Smith form decomposition:
  \[
  A =
  \begin{bmatrix}
    p^3 + 1 & p \\ 2 p^4 & p^2
  \end{bmatrix}
  =
  \begin{bmatrix}
    1 & 0 \\ -p^4 & 1
  \end{bmatrix}
  \begin{bmatrix}
    1 & \\ & - p^2 + p^5
  \end{bmatrix}
  \begin{bmatrix}
    1 & p \\ -p^2 & -1 - p^3
  \end{bmatrix}
  \]
  whose characteristic polynomial is
  \[ f = x^2 - (1 + p^2 + p^3) x + p^2 - p^5. \]
  Observe that $A$ is
  $p$-characterized.
  Now let $m = 3$ and consider $A \rem p^m$ and its Smith form decomposition:
  \[
  A \rem p^3 =
  \begin{bmatrix}
    1 & p \\ 0 & p^2
  \end{bmatrix}
  =
  \begin{bmatrix}
    1 & 0 \\ 0 & 1
  \end{bmatrix}
  \begin{bmatrix}
    1 & \\ & p^2
  \end{bmatrix}
  \begin{bmatrix}
    1 & -p \\ 0 & 1
  \end{bmatrix},
  \]
  which has the characteristic polynomial \[ x^2 - (1 + p^2) x + p^2. \] Thus $A
  \rem p^3$ is $p$-characterized as well. \qed
\end{example}

The following bound is a relatively well-known fact, but we prove
  it for completeness.  Here $\M_n(\ZZ/p^m\ZZ)$ is the ring of
  $n\times n$ matrices over $\ZZ/p^m\ZZ$.

\begin{lemma}\label{lem:density-glnpm}
  $|\M_n(\ZZ/p^m\ZZ)| / |\GL_n(\ZZ/p^m\ZZ)| < 4$.
\end{lemma}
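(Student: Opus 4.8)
The plan is to evaluate both cardinalities explicitly, collapse the quotient into a product over $i$, and bound that product by a constant independent of $p$ and $n$.

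\emph{Step 1: the two counts.} Clearly $|\M_n(\ZZ/p^m\ZZ)| = p^{mn^2}$, since each of the $n^2$ entries ranges over $p^m$ residues. For the general linear group I would observe that a matrix over $\ZZ/p^m\ZZ$ is invertible exactly when its determinant is a unit, i.e.\ when $\det M \not\equiv 0 \pmod p$, i.e.\ when the reduction $M \bmod p$ lies in $\GL_n(\FFp)$. The reduction-mod-$p$ map $\pi\colon \M_n(\ZZ/p^m\ZZ) \to \M_n(\FFp)$ is a surjective homomorphism of additive groups whose kernel has order $p^{mn^2}/p^{n^2} = p^{(m-1)n^2}$, so every fibre of $\pi$ has that size; since $\GL_n(\ZZ/p^m\ZZ) = \pi^{-1}(\GL_n(\FFp))$, this gives $|\GL_n(\ZZ/p^m\ZZ)| = p^{(m-1)n^2}\,|\GL_n(\FFp)|$. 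Combining with the classical count $|\GL_n(\FFp)| = \prod_{i=0}^{n-1}(p^n - p^i) = p^{n(n-1)/2}\prod_{i=1}^{n}(p^i-1)$ and simplifying (the remaining exponent of $p$ is $n^2 - n(n-1)/2 = n(n+1)/2 = 1+2+\cdots+n$), I get
\[
  \frac{|\M_n(\ZZ/p^m\ZZ)|}{|\GL_n(\ZZ/p^m\ZZ)|} = \frac{\prod_{i=1}^n p^i}{\prod_{i=1}^n (p^i-1)} = \prod_{i=1}^{n} \frac{1}{1 - p^{-i}}.
\]

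\emph{Step 2: bounding the product.} Each factor $(1-p^{-i})^{-1}$ decreases in $p$ and the number of factors increases with $n$, so the quantity above is at most the infinite product $\prod_{i=1}^{\infty}(1 - 2^{-i})^{-1}$. I would peel off the first three factors exactly, $2 \cdot \tfrac{4}{3} \cdot \tfrac{8}{7} = \tfrac{64}{21}$, and estimate the tail: for $i \ge 4$ one has $1 - 2^{-i} \ge \tfrac{15}{16}$, so $(1-2^{-i})^{-1} = 1 + \tfrac{2^{-i}}{1-2^{-i}} \le 1 + \tfrac{16}{15}\,2^{-i} \le \exp\!\bigl(\tfrac{16}{15}\,2^{-i}\bigr)$; multiplying over $i \ge 4$ gives $\prod_{i \ge 4}(1-2^{-i})^{-1} \le \exp\!\bigl(\tfrac{16}{15}\sum_{i\ge4}2^{-i}\bigr) = e^{2/15}$. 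Using $e^x < (1-x)^{-1}$ for $0 < x < 1$ we get $e^{2/15} < \tfrac{15}{13}$, hence the whole product is less than $\tfrac{64}{21}\cdot\tfrac{15}{13} = \tfrac{960}{273} < 4$, which proves the claim.

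\emph{Expected obstacle.} There is no genuine difficulty beyond bookkeeping; the only point that needs care is keeping the final constant strictly below $4$. A crude estimate (applying $1+x\le e^x$ to every factor at once) only yields a bound around $e^2$, so one must separate off enough leading factors — three is already sufficient — before bounding the remaining geometric tail.
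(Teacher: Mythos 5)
Your proof is correct and follows essentially the same route as the paper's: both reduce the count of $\GL_n(\ZZ/p^m\ZZ)$ to $|\GL_n(\FFp)|$ by observing that invertibility mod $p^m$ is equivalent to invertibility of the reduction mod $p$ (the paper via $p$-adic digit expansion of the entries, you via the fibers of the reduction map — the same thing), and then bound the ratio $\prod_{i=1}^{n}(1-p^{-i})^{-1}$. The only difference is that the paper simply cites the inequality $\prod_{i\ge 1}(1-p^{-i}) > 1/4$ as well known, whereas you actually prove it by peeling off three factors and estimating the geometric tail; that is a nice self-contained supplement, but the underlying argument is the same.
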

\begin{proof}
  Any matrix $A \in \M_n(\ZZ/p^m\ZZ)$ can be written as $A = A_0 + p A_1 +
  \ldots + p^{m-1} A_{m-1}$ with $A_i$'s having entries from $[0, p)$. Then $A
  \in \GL_n(\ZZ/p^m\ZZ)$ if and only if $A_0 \in \GL_n(\ZZ/p\ZZ)$. There are
  $(p^{n^2})^{m-1}$ ways to construct the components $A_1, \ldots, A_{m-1}$ for
  each given $A_0 \in \GL_n(\ZZ/p\ZZ)$. So $|\GL_n(\ZZ/p^m\ZZ)| = p^{(m-1)n^2}
  \cdot |\GL_n(\ZZ/p\ZZ)|$.  Next, recall the well-known density bound for
  non-singular matrices over finite fields:
  \[
  \frac{ |\GL_n(\ZZ/p\ZZ)| }{ p^{n^2} } = \left(1 - \frac{1}{p} \right) \left(1
    - \frac{1}{p^2} \right) \cdots \left(1 - \frac{1}{p^n} \right) > 1/4.
  \]
  Thus
  \[
    \cfrac{|\M_n(\ZZ/p^m\ZZ)|}{|\GL_n(\ZZ/p^m\ZZ)|} = \cfrac{p^{mn^2}}{
      p^{(m-1)n^2} |\GL_n(\ZZ/p\ZZ)|} = \cfrac{p^{n^2}}{ |\GL_n(\ZZ/p\ZZ)| } < 4.
  \]
\end{proof}

We can now establish our main density result.

\begin{theorem}
  \label{thm:density-class-s-m}
  Let $n$ be a positive integer, $\epsilon > 0$, and $p$ any prime greater than
  $16 (n^2+3n)/ \epsilon$. Fix a set of integers $0 \le e_1 \le e_2 \le
  \cdots \le e_n < \infty$ and let $m \ge e_1 + \ldots + e_n + 1$ and $S =
  \diag(p^{e_1}, \ldots, p^{e_n}) \in \ZZ^{\nxn}$.  Then the number of matrices
  in $\SS_S^m$ which are $p$-characterized and hence
  $p$-correspondent is at least $(1 - \epsilon) \cdot
  |\SS_S^m|$.
\end{theorem}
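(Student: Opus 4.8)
The plan is to count, within the finite set $\SS_S^m$, the matrices that fail to be $p$-characterized, and show this failure set is small by transporting the counting argument of Lemma~\ref{lem:uav-randomized} (which concerns the unstructured cube $[0,N)^{2n^2}$) over to the structured set $\SS_S^m$ via the uniform-sampling machinery of Lemmas~\ref{lem:uniform-sampling-1} and~\ref{lem:uniform-sampling}. Concretely, fix $A\in\SS_S^m$. Every pair $(U,V)$ with entries in $[0,p^m)$ such that $U,V$ are invertible mod $p$ and $\gbar_k(U,V)\not\equiv 0\pmod p$ for all $k\in[1,n]$ produces an integer matrix $UAV$ that is $p$-characterized (by Lemma~\ref{lem:content-coefficients}, exactly as in Lemma~\ref{lem:uav-randomized}, taking $N=p^m$), and since $v_p(\det(UAV))=v_p(\det S)<m$, by Lemma~\ref{lem:remp-is-p-char} the reduction $(UAV)\rem p^m$ is also $p$-characterized. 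So the map $(U,V)\mapsto (UAV)\rem p^m$ sends ``good'' pairs into the $p$-characterized part of $\orbit(S)=\SS_S^m$.

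The key step is a double-counting / averaging argument. By Lemma~\ref{lem:uniform-sampling}, each fiber $P_B=\{(U,V): B=(UAV)\rem p^m\}$ over a fixed $B\in\SS_S^m$ has the same cardinality $|\GL_n(\ZZ/p^m\ZZ)|^2/|\SS_S^m|$, independent of $B$ (this uses $m>e_1+\cdots+e_n=v_p(\det S)$, which holds since $m\ge e_1+\cdots+e_n+1$). Hence the number of pairs $(U,V)$ with entries in $[0,p^m)$, both invertible mod $p$, mapping to a non-$p$-characterized $B$ is exactly $|P_B|$ times the number of such bad $B$'s. But Lemma~\ref{lem:uav-randomized} (applied with $\epsilon'$ replacing $\epsilon$, $N=p^m$, noting $p>16(n^2+3n)/\epsilon>(n^2+3n)/\epsilon'$ for a suitable $\epsilon'$) bounds the total number of ``bad'' pairs — those where $\det U\equiv 0$, or $\det V\equiv 0$, or some $\gbar_k\equiv 0\pmod p$ — by $(n^2+3n)p^{2n^2m-1}\cdot(p^m/p)^{0}$... more precisely by $\tfrac{n^2+3n}{p}p^{2mn^2}$. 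Dividing by the common fiber size $|P_B|=|\GL_n(\ZZ/p^m\ZZ)|^2/|\SS_S^m|$ bounds the number of bad $B$ in $\SS_S^m$ by
\[
\frac{(n^2+3n)\,p^{2mn^2}/p}{|\GL_n(\ZZ/p^m\ZZ)|^2/|\SS_S^m|}
= \frac{(n^2+3n)}{p}\cdot\frac{p^{2mn^2}}{|\GL_n(\ZZ/p^m\ZZ)|^2}\cdot|\SS_S^m|.
\]

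Finally I would invoke Lemma~\ref{lem:density-glnpm}, which gives $p^{mn^2}/|\GL_n(\ZZ/p^m\ZZ)|<4$, hence $p^{2mn^2}/|\GL_n(\ZZ/p^m\ZZ)|^2<16$. Substituting, the count of non-$p$-characterized matrices in $\SS_S^m$ is at most $\tfrac{16(n^2+3n)}{p}|\SS_S^m|$, which is less than $\epsilon|\SS_S^m|$ precisely by the hypothesis $p>16(n^2+3n)/\epsilon$. Therefore at least $(1-\epsilon)|\SS_S^m|$ matrices in $\SS_S^m$ are $p$-characterized, and by Theorem~\ref{thm:pdet->pcosp} these are all $p$-correspondent. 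The step I expect to be the main obstacle — or at least the one requiring the most care — is verifying that every bad $B\in\SS_S^m$ is actually hit by a pair $(U,V)$ counted in Lemma~\ref{lem:uav-randomized} (so that the fiber-counting inequality goes the right way): one needs that if $B$ is not $p$-characterized then every pair $(U,V)\in P_B$ with $U,V$ invertible mod $p$ must have some $\gbar_k(U,V)\equiv 0\pmod p$ — this is the contrapositive of the implication established in the first paragraph, combined with the fact (Lemma~\ref{lem:uniform-sampling-1}) that $P_B$ consists entirely of pairs invertible mod $p$, so no separate accounting for singular $U$ or $V$ is needed on the $\SS_S^m$ side.
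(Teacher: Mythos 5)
Your proposal follows essentially the same route as the paper: apply Lemma~\ref{lem:uav-randomized} with $N=p^m$ to bound the bad $(U,V)$ pairs, use Lemma~\ref{lem:remp-is-p-char} to pass to $(LSR)\rem p^m\in\SS_S^m$, divide by the constant fiber size from Lemma~\ref{lem:uniform-sampling}, and finish with Lemma~\ref{lem:density-glnpm}. The only cosmetic wrinkle is your choice to ``fix $A\in\SS_S^m$'' and work with $UAV$ rather than working with $USV$ directly as the paper does; since $P_B$ in Lemma~\ref{lem:uniform-sampling} is defined via $S$, you should simply take $A=S$ (which indeed lies in $\SS_S^m$ because $e_n<m$), and then your argument is literally the paper's.
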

\begin{proof}
  Let
    \[
    P = \{(L, R) : L, R \in [0, p^m)^{\nxn} \}.
    \]
    For any $A \in \SS_S^m$, let $P_A \subseteq P$ be as in
    Lemma~\ref{lem:uniform-sampling}:
    \[
    P_A = \{ (L, R) : L, R \text{ have entries from } [0, p^m) \text{ and } A
    = (LSR) \rem p^m \ \}.
    \]
    If at least one pair $(L, R) \in P_A$ is such that $LSR$ is
    $p$-characterized, then $A$ is $p$-characterized by
    Lemma~\ref{lem:remp-is-p-char} (recall $A = (LSR) \rem p^m$ and $m \ge e_1 +
    \ldots + e_n + 1$ implies $m > v_p(\det A)$).  On the other hand, if every
    pair $(L,R) \in P_A$ is such that $LSR$ is not $p$-characterized then $A$
    can be either $p$-characterized or not (because the converse of
    Lemma~\ref{lem:remp-is-p-char} is not necessarily true; some non
    $p$-characterized matrices can become $p$-characterized after applying $\rem
    p^m$). To derive an upper bound on the number of non $p$-characterized
    matrices in $\SS_S^m$, we allow the worst outcome: $A = (LSR) \rem p^m$ is not
    $p$-characterized when $LSR$ is not $p$-characterized for all pairs $(L, R)
    \in P_A$.

    The number of sets, $P_A$, having every pair $(L, R)$ with a non
    $p$-characterized product $LSR$, can be obtained as the ratio between the
    total number of pairs giving non $p$-characterized products (which is at
    most $(\epsilon / 16) |P|$ by Lemma~\ref{lem:uav-randomized}) divided by the
    size of each $P_A$ (which is $|\GL_n(\ZZ/p^m\ZZ)^2|/|\SS_S^m|$ by
    Lemma~\ref{lem:uniform-sampling}). So the maximum number of matrices in
    $\SS_S^m$ which are not $p$-characterized is
    \begin{align*}
      \frac{(\epsilon/16) |P|}{|P_A|} &
      =
      \frac{(\epsilon/16) |\M_n(\ZZ/p^m\ZZ)|^2}{|\GL_n(\ZZ/p^m\ZZ)^2|/|\SS_S^m|}
      <  \epsilon |\SS_S^m|,
    \end{align*}
    where the inequality follows using Lemma~\ref{lem:density-glnpm}.

    Hence there are at least $(1 - \epsilon)|\SS_S^m|$ matrices in $\SS_S^m$
    which are $p$-characterized, and each one of those matrices is also
    $p$-correspondent by Theorem~\ref{thm:pdet->pcosp}.
\end{proof}



\subsection{Density at small primes}

The density estimate of Theorem~\ref{thm:density-class-s-m} is limited to large
primes. Hereby we report on experiments with small primes. For a given size $n$
and a prime power $p^m$, we enumerate the set of all $n \times n$
matrices with entries from $[0, p^m)$
and $v_p(\text{determinant}) < m$.
We then count the fraction of matrices which are $p$-correspondent.

\begin{table}[ht]
  \caption{Density (in percentage) of $p$-characterized and $p$-correspondent
  matrices among the set of $n \times n$ non-singular matrices with entries
  from $[0, p^m)$ and $v_p(\det A) < m$.
  See the text for an explanation of the last column.}
  \label{tab:density}
\begin{center}
\begin{tabular}{|c|c|c|c|c|c|}
  \hline
  $p$ & $m$ & $n$ & $p$-characterized & $p$-correspondent & min $p$-char. \\
  \hline
  & 1 & 2 & 56.25 & 81.25 & 33.33 \\
  & 2 & 2 & 53.52 & 80.08 & 33.33 \\
  & 3 & 2 & 53.34 & 80.00 & 33.33 \\
  2 & 4 & 2 & 53.33 & 80.00 & 33.33 \\
  & 1 & 3 & 29.10 & 71.29 & 18.75 \\
  & 2 & 3 & 26.51 & 70.14 & 16.67 \\
  & 1 & 4 & 15.61 & 66.67 & 6.667 \\
  \hline
  3 & 1 & 2 & 67.90 & 90.12 & 62.50 \\
  & 2 & 2 & 67.50 & 90.00 & 50.00 \\
  & 3 & 2 & 67.50 & 90.00 & 50.00 \\
  & 1 & 3 & 45.58 & 86.73 & 42.77 \\
  \hline
  5 & 1 & 2 & 80.16 & 96.16 & 79.17 \\
  & 2 &  2 & 80.13 & 96.15 & 78.96 \\
  \hline
  7 & 1 & 2 & 85.76 & 98.00 & 85.42 \\
  \hline
\end{tabular}
\end{center}
\end{table}

Table~\ref{tab:density} shows the density of $p$-characterized and
$p$-correspondent non-singular matrices for small values of $p, m, n$. The
fourth and fifth columns report the fraction (in percentage) of
$p$-characterized and $p$-correspondent matrices among all $n \times n$
non-singular matrices with entries $[0, p^m)$ and who determinant has
$p$-adic valuation smaller than $m$.
Recall from Example~\ref{ex:converse} that
matrices can be $p$-correspondent but not necessarily
$p$-characterized, thus
the reported $p$-characterized density is
lower than $p$-correspondent density.
 
The sixth column in the table reports the minimum percentage of
$p$-characterized matrices among all Smith forms. Given $p^m$ and $n$, we
consider the set of all $n \times n$ matrices with entries $[0, p^m)$
and $v_p(\text{determinant}) < m$. We partition these matrices
by their Smith forms localized at $p$, where we only care about the powers of
$p$ in the invariant factors and treat the other prime powers as units. For
example, when $p^m = 2^2$ and $n = 2$, we get the following (non-singular) Smith
forms localized at $2$:
\[
    \begin{bmatrix} 1 & \\ & 1 \end{bmatrix},
    \begin{bmatrix} 1 & \\ & 2 \end{bmatrix}.
\]
We then count the fraction of $p$-characterized matrices in each partition and
report the minimum percentage among all partitions.

Finally, the table shows that the density drops as $n$ increases and
as $p$ decreases, which is consistent with the proofs for large
primes. An open and interesting question is to prove similar
density estimates for small primes,
i.e. when $p$ is small compared to $n$.

\section*{Acknowledgements}
The authors are supported by the
Natural Sciences and Engineering Research
Council of Canada.
Computations were carried out using the Sage computer
algebra system \citep{sage}.
We thank Dino J. Lorenzini and the anonymous referee for
providing helpful comments on earlier versions of this paper.

\bibliographystyle{plainnat}
\bibliography{ref}

\end{document}